\newcounter{mnotecount}[section]
\newcommand{\rmnote}[1]{}
\DeclareFontFamily{U}{mathb}{\hyphenchar\font45}
\DeclareFontShape{U}{mathb}{m}{n}{
      <5> <6> <7> <8> <9> <10> gen * mathb
      <10.95> mathb10 <12> <14.4> <17.28> <20.74> <24.88> mathb12
      }{}
\DeclareSymbolFont{mathb}{U}{mathb}{m}{n}
\let\dot\relax
\DeclareMathAccent{\dot}{0}{mathb}{"39}
\let\ddot\relax
\DeclareMathAccent{\ddot}{0}{mathb}{"3A}
\let\dddot\relax
\DeclareMathAccent{\dddot}{0}{mathb}{"3B}
\let\ddddot\relax
\DeclareMathAccent{\ddddot}{0}{mathb}{"3C}
\theoremstyle{plain}
\newtheorem*{theorem*}{Theorem}
\newtheorem{theorem}{Theorem}
\newtheorem*{lemma*}{Lemma}
\newtheorem{lemma}[theorem]{Lemma}
\newtheorem*{proposition*}{Proposition}
\newtheorem*{corollary*}{Corollary}
\newtheorem*{claim*}{Claim}
\newtheorem*{conjecture*}{Conjecture}
\newtheorem*{question*}{Question}
\theoremstyle{definition}
\newtheorem*{definition*}{Definition}
\newtheorem*{example*}{Example}
\newtheorem*{algorithm*}{Algorithm}
\newtheorem*{remark*}{Remark}
\newtheorem*{remarks*}{Remarks}
\newtheorem{remark}[theorem]{Remark}
\newtheorem*{convention*}{Convention}
\newcommand{\al}{\alpha}
\newcommand{\be}{\beta}
\newcommand{\de}{\delta}
\newcommand{\ep}{\epsilon}
\newcommand{\rh}{\rho}
\newcommand{\si}{\sigma}
\newcommand{\ta}{\tau}
\newcommand{\vh}{\varphi}
\newcommand{\om}{\omega}
\newcommand{\Ph}{\Phi}
\newcommand{\N}{\mathbb{N}}
\newcommand{\R}{\mathbb{R}}
\newcommand{\cA}{\mathcal{A}}
\newcommand{\cC}{\mathcal{C}}
\newcommand{\cE}{\mathcal{E}}
\newcommand{\fM}{\mathfrak{M}}
\newcommand{\p}{\partial}
\renewcommand{\o}{\circ}
\newcommand{\on}{\operatorname}
\newcommand{\sr}[1]%
{\ifmmode{}^\dagger\else${}^\dagger$\fi\ifvmode
\vbox to 0pt{\vss
 \hbox to 0pt{\hskip\hsize\hskip1em
 \vbox{\hsize3cm\raggedright\pretolerance10000
 \noindent #1\hfill}\hss}\vss}\else
 \vadjust{\vbox to0pt{\vss%
 \hbox to 0pt{\hskip\hsize\hskip1em%
 \vbox{\hsize3cm\raggedright\pretolerance10000%
 \noindent #1\hfill}\hss}\vss}}\fi%
}
\newcommand{\A}{\;\forall}
\newcommand{\E}{\;\exists}
\newcommand{\ul}{\underline}
\title[]
{Quasianalytic ultradifferentiability cannot be tested in lower dimensions}
\author[A.~Rainer]{Armin Rainer}
\address{University of Education Lower Austria,
Campus Baden M\"uhlgasse 67, A-2500 Baden \&
Fakult\"at f\"ur Mathematik, Universit\"at Wien, 
Oskar-Morgenstern-Platz~1, A-1090 Wien, Austria}
\email{armin.rainer@univie.ac.at}
\begin{document}

\begin{abstract}
	We show that, in contrast to the real analytic case, 
	quasianalytic ultradifferentiability can never be tested in lower dimensions.  
	Our results are based on a construction due to Jaffe.
\end{abstract}

\thanks{The author was supported by the Austrian Science Fund (FWF) Project P 26735-N25.}
\keywords{Quasianalytic, ultradifferentiable class, Denjoy--Carleman class, Osgood--Hartogs type problem}
\subjclass[2010]{Primary 26E10; Secondary 30D60, 46E10, 58C25}
\date{\today}

\maketitle

\section{Introduction}

In a recent paper \cite{Bochnak:aa} Bochnak and Kucharz proved that a function on a compact real analytic manifold is real analytic if and only if 
its restriction to every closed real analytic submanifold of dimension two is real analytic. A local version of this theorem 
can be found in \cite{Bochnak:2018aa}.   
It is natural to ask if a similar statements holds in quasianalytic classes of smooth functions $\cC$ which are strictly bigger than the real 
analytic class, but share the property of analytic continuation:
\begin{quote}\it
  	Is a function defined on a $\cC$-manifold of class $\cC$ provided that all its restrictions to $\cC$-submanifolds of lower dimension are 
  	of class $\cC$?
  \end{quote}  
We will show in this paper that the answer to this question is negative for 
all standard quasianalytic \emph{ultradifferentiable} classes defined by growth estimates for the iterated derivatives, 
even if we already know that the function is smooth. 
We shall always assume that the classes $\cC$ are stable under composition and admit an inverse function theorem, consequently,
manifolds of class $\cC$ are well-defined. 

This article is partly motivated by the development of the \emph{convenient setting} for ultradifferentiable function classes in 
\cite{KMRc,KMRq,KMRu} which provides an (ultra)differential calculus for mappings between infinite dimensional locally convex spaces with a 
mild completeness property. 
Typically, the convenient calculus is based on Osgood--Hartogs type theorems which describe objects by ``restrictions'' to 
certain better understood test objects (cf.\ \cite{SpallekTworzewskiWiniarski90}).
While many non-quasianalytic classes can be tested along non-quasianalytic \emph{curves} in the same class \cite{KMRc}, 
the analogous statement is false for quasianalytic classes even if the function in question is smooth. This was shown by Jaffe \cite{Jaffe16} 
for quasianalytic Denjoy--Carleman classes of Roumieu type. In \cite{KMRu} we overcame this problem by testing along all
\emph{Banach plots} in the class (i.e. mappings defined in arbitrary Banach spaces) which raised the question if there is a 
subclass of plots sufficient for recognizing the class. 

The results of this paper show that in finite dimensions quasianalytic $\cC$-plots with lower dimensional domain are never enough 
for testing $\cC$-regularity (even if smoothness is already known). 
In particular, restrictions to $\cC$-submanifolds of lower dimensions cannot recognize $\cC$-regularity.
Actually, we will prove more:
For any $n \ge 2$, any regular quasianalytic class $\cC$, and any positive sequence $N=(N_k)$ 
there exists a function $f \in \cC^\infty(\R^n)$ such that $f \o p \in \cC$ for all $\cC$-plots $p : \R^m \supseteq U \to \R^n$ with 
$m < n$, but 
\[
	\sup_{\substack{x \in K\\\al\in \N^n}} \frac{|\p^\al f(x)|}{\rh^{|\al|} |\al|! N_{|\al|}} = \infty
\] 
for all neighborhoods $K$ of $0$ in $\R^n$ and all $\rh>0$. It will be specified in the next two subsections what we mean here by a 
regular quasianalytic class.

All our results follow from slight modifications of Jaffe's construction.

\subsection{Denjoy--Carleman classes}

Let $U \subseteq \R^n$ be open. 
Let $M=(M_k)$ be a positive sequence.
For $\rh>0$ and $K\subseteq U$ compact consider the seminorm
\[
	\|f\|^M_{K,\rh} := \sup_{\substack{x \in K\\\al\in \N^n}} \frac{|\p^\al f(x)|}{\rh^{|\al|} |\al|! M_{|\al|}}, \quad f \in \cC^\infty(U).  
\]
The \emph{Denjoy--Carleman class of Roumieu type} is defined by 
\[
	\cE^{\{M\}}(U) := \{f \in \cC^\infty(U) : \A \text{ compact } K \subseteq U \E \rh >0 : \|f\|^M_{K,\rh} <\infty\},
\]
and the \emph{Denjoy--Carleman class of Beurling type} by 
\[
	\cE^{(M)}(U) := \{f \in \cC^\infty(U) : \A \text{ compact } K \subseteq U \A \rh >0 : \|f\|^M_{K,\rh} <\infty\},
\]
We shall assume that $M=(M_k)$ is
\begin{enumerate}
	\item logarithmically convex, i.e.\ $M_k^2 \le M_{k-1} M_{k+1}$ for all $k$, and satisfies
	\item $M_0 = 1 \le M_1$ and
	\item $M_k^{1/k} \to \infty$.
\end{enumerate} 
A positive sequence $M=(M_k)$ having these properties (1)--(3) is called a \emph{regular weight sequence}.
The Denjoy--Carleman classes $\cE^{\{M\}}$ and $\cE^{(M)}$ associated with a regular weight sequence $M$ 
are stable under composition and admit a version of the inverse function theorem (cf.\ \cite{RainerSchindl14}).

Let $M=(M_k)$ and $N=(N_k)$ be positive sequences. Then boundedness of the sequence $(M_k/N_k)^{1/k}$ is a sufficient condition for the 
inclusions 
$\cE^{\{M\}} \subseteq \cE^{\{N\}}$ and $\cE^{(M)} \subseteq \cE^{(N)}$ (this means that the inclusions hold on all open sets). 
The condition is also necessary provided that $k! M_k$ is logarithmically convex, see \cite{Thilliez08} and \cite{Bruna80/81}, 
(so in particular if $M$ is a regular weight sequence). 
For instance, stability of the classes $\cE^{\{M\}}$ and $\cE^{(M)}$ by derivation is equivalent to boundedness of $(M_{k+1}/M_k)^{1/k}$ 
(for the necessity we assume that $k! M_k$ is logarithmically convex). 
If $(M_k/N_k)^{1/k} \to 0$ then $\cE^{\{M\}} \subseteq \cE^{(N)}$, and conversely provided that $k! M_k$ is logarithmically convex. 
Hence regular weight sequences $M$ and $N$ are called \emph{equivalent} if 
there is a constant $C>0$ such that $C^{-1} \le (M_k/N_k)^{1/k}\le C$.

For the constant sequence $\mathbf{1} = (1,1,1,\ldots)$ we get the class of real analytic functions $\cE^{\{\bf 1\}} = \cC^\om$ in the Roumieu 
case and the restrictions of entire functions $\cE^{(\bf 1)}$ in the Beurling case.
Note that the conditions (1) and (2) imply that the sequence $M_k^{1/k}$ is increasing. 
Thus, if $M$ satisfies (1) and (2) then the strict inclusions $\cC^\om \subsetneq \cE^{\{M\}}$ and  
$\cC^\om \subsetneq \cE^{(M)}$ are both equivalent to (3)
(for the latter observe that (3) and $\cC^\om = \cE^{(M)}$ would imply that all classes 
$\cC^\om \subseteq \cE^{(\sqrt M)} \subseteq  \cE^{\{\sqrt M\}}  \subseteq \cE^{(M)}$ actually coincide, a contradiction).

A regular weight sequence $M=(M_k)$ is called \emph{quasianalytic} if 
\begin{equation} \label{eq:qaM}
	\sum_k \frac{M_k}{(k+1) M_{k+1}} = \infty.
\end{equation}
By the Denjoy--Carleman theorem, this is the case if and only if the class $\cE^{\{M\}}$ is quasianalytic, 
or equivalently $\cE^{(M)}$ is quasianalytic. See e.g.\ \cite[Theorem 1.3.8]{Hoermander83I} and \cite[Theorem 4.2]{Komatsu73}.

A class $\cC$ of $\cC^\infty$-functions is called \emph{quasianalytic} if 
the restriction to $\cC(U)$ of the map $\cC^\infty(U) \ni f \mapsto T_a f$ which takes $f$ to its infinite Taylor series at $a$ 
is injective for any connected open $U \ni a$. 
For example, the real analytic class $\cC^\om$ has this property and indeed \eqref{eq:qaM} reduces to $\sum_k \frac1{k+1} = \infty$ in this 
case. Further examples of quasianalytic classes $\cE^{\{M\}}$ and $\cE^{(M)}$ that strictly contain 
$\cC^\om$ are given by $M_k := (\log(k+e))^{\de k}$ 
for any $0 < \de \le 1$.

Let $V \subseteq \R^m$ be open.
A mapping $p : V \to U$ of class $\cE^{\{M\}}$ (which means that the component functions $p_j$ are of class $\cE^{\{M\}}$) 
is called a \emph{$\cE^{\{M\}}$-plot in $U$ of dimension $m$}. 
If $m<n$ we say that $p$ is \emph{lower dimensional}.

Now we are ready to state our first results.

\begin{theorem} \label{thm:main}
	Let $M=(M_k)$ be a quasianalytic regular weight sequence. 
	For any $n \ge 2$ and any positive sequence $N=(N_k)$
	there exists a $\cC^\infty$-function $f$ on $\R^n$ of class $\cE^{\{M\}}$ on $\R^n \setminus \{0\}$   
	which does not belong to $\cE^{\{N\}}(\R^n)$, but
	$f \o p \in \cE^{\{M\}}$ for all lower dimensional $\cE^{\{M\}}$-plots $p$ in $\R^n$. 
\end{theorem}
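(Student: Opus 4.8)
\emph{Strategy.} Following Jaffe, I would realize $f$ as a lacunary sum $f=\sum_{\ell} f_\ell$ of genuinely real-analytic pieces concentrated near the origin. Each $f_\ell$ lives at a spatial scale $r_\ell\downarrow 0$ and is centred at a point $c_\ell\in\R^n$ with $|c_\ell|\approx r_\ell$; a convenient model is $f_\ell(x)=a_\ell\,e^{-\mu_\ell|x-c_\ell|^2}$ (optionally modulated by a real-analytic oscillation), so that each piece is entire, hence lies in $\cE^{\{M\}}$, and the only possible singularity of the sum is at $0$. The free parameters are the amplitudes $a_\ell$, the inverse widths $\mu_\ell$, the scales $r_\ell$, and --- crucially, since $n\ge 2$ --- the \emph{directions} $c_\ell/|c_\ell|\in S^{n-1}$. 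The amplitudes will be driven to zero very fast (for smoothness), the widths to infinity very fast (to defeat $N$), and the directions spread over the sphere (to defeat lower-dimensional plots).

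\emph{The three direct properties.} Smoothness follows once $a_\ell(\sqrt{\mu_\ell})^{k}\to 0$ for every fixed $k$, which makes each derivative series converge uniformly and vanish in the limit at $0$; choosing $a_\ell$ to beat all fixed powers of $\mu_\ell$ secures $f\in\cC^\infty(\R^n)$ with a well-defined jet at $0$. That $f\in\cE^{\{M\}}(\R^n\setminus\{0\})$ is local: a compact set avoiding $0$ lies in $\{|x|\ge\delta\}$, where every piece of scale $r_\ell<\delta$ is damped by $e^{-\mu_\ell\delta^2/4}$, so the tail converges in $\cE^{\{M\}}$-seminorm while the finitely many remaining pieces are entire. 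The failure $f\notin\cE^{\{N\}}(\R^n)$ is read off at the moving centres: $|\partial^\alpha f(c_\ell)|\gtrsim a_\ell\,\mu_\ell^{|\alpha|/2}\sqrt{|\alpha|!}$, and by taking the order $|\alpha|=k_\ell\to\infty$ one can, for every fixed $\rho$, force $a_\ell\,\mu_\ell^{k_\ell/2}\sqrt{k_\ell!}\,/(\rho^{k_\ell}k_\ell!\,N_{k_\ell})\to\infty$ provided $\mu_\ell$ grows fast enough relative to $1/a_\ell$ and to $N$. The competition between the smoothness requirement (fixed $k$) and the $N$-failure (order $k_\ell\to\infty$) is resolved by fixing $N$, then $\mu_\ell$, then $a_\ell$, in that order. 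Note that $f$ cannot be flat at $0$: were it flat, then on any line $L$ through $0$ with $f|_L\not\equiv 0$ the restriction $f|_L$ would be a nonzero quasianalytic function flat at a point, which is impossible; this is also where $n\ge 2$ enters, since $\R^n\setminus\{0\}$ is then connected.

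\emph{Composition with lower-dimensional plots.} Let $p\colon U\to\R^n$ be an $\cE^{\{M\}}$-plot with $\dim U=m<n$. Away from $p^{-1}(0)$ the composite is harmless: $f$ is $\cE^{\{M\}}$ near $p(t)$ and $\cE^{\{M\}}$ is stable under composition, so $f\circ p$ is $\cE^{\{M\}}$ there. The entire difficulty sits at a point $t_0$ with $p(t_0)=0$. Here I would isolate a lemma describing how a quasianalytic $m$-plot reaches the origin: either $p\equiv 0$ (trivial), or $p$ vanishes to a finite order $d$ at $t_0$, so that $|p(t)|\lesssim|t-t_0|^{d}$ and the set of limiting directions $p(t)/|p(t)|$ lies in the image of a sphere $S^{m-1}$, an at most $(m-1)$-dimensional --- hence lower-dimensional --- subset of $S^{n-1}$. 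Estimating $f\circ p$ near $t_0$ by the Fa\`a di Bruno formula then reduces to controlling, scale by scale, how many pieces $f_\ell$ the image of $p$ can meet and at what effective frequency.

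\emph{The main obstacle.} The heart of the matter is to choose the directions $c_\ell/|c_\ell|$ and the widths so that \emph{every} lower-dimensional plot meets only an $M$-summable portion of the pieces, while the full-dimensional derivatives still violate $N$. The dangerous plots are those of vanishing order $d=1$, i.e.\ essentially restrictions to $m$-planes through $0$: such a plot sees a piece $f_\ell$ at the \emph{full} effective frequency $\sqrt{\mu_\ell}$, so a single badly placed piece would already destroy the $\cE^{\{M\}}$-estimate along that plane. Since an $m$-plane meets $S^{n-1}$ only in an $(m-1)$-sphere and $m-1<n-1$, there is room to distribute the directions $c_\ell/|c_\ell|$ --- with angular separation large compared with the angular width $1/(\sqrt{\mu_\ell}\,r_\ell)$ of the pieces --- so that no plane, and (after the order-of-vanishing analysis) no plot, captures more than a controlled, summable set of pieces at each scale; this is exactly where the dimension gap $n\ge 2$, $m\le n-1$ is used, and where Jaffe's one-dimensional argument must be upgraded. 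Making this simultaneous over the continuum of all plots --- rather than for one plot at a time --- is the delicate step, and I expect it to require either a covering/dimension argument on $S^{n-1}$ or an explicit arithmetic placement of the directions, together with a careful matching of the growth of $\mu_\ell$, the decay of $a_\ell$, and the lacunarity of the scales $r_\ell$.
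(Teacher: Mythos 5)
You have correctly reproduced the soft skeleton of the argument --- a lacunary sum $f=\sum_\ell f_\ell$ of analytic bumps with centres $c_\ell\to 0$, the order of quantifiers resolving the tension between smoothness (fixed derivative order) and failure of $\cE^{\{N\}}$ (order $k_\ell\to\infty$), locality plus stability under composition away from $0$, and finite vanishing order of a nonzero quasianalytic plot. But the step you flag as ``the delicate step'' is not a technicality to be filled in later: it is the entire mathematical content of the theorem, and the mechanism you propose for it (spreading the directions $c_\ell/|c_\ell|$ over $S^{n-1}$ with angular separation large compared with the angular width of the pieces, backed by a covering/dimension argument or an arithmetic placement) is misdirected and would fail. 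The reason is that the adversary plot does not need to engage \emph{all} pieces, only infinitely many of them; by compactness the directions $c_\ell/|c_\ell|$ always have an accumulation point on $S^{n-1}$, and near such a point any separation condition degenerates along a subsequence. The dangerous plots are then not the hyperplanes (your ``$d=1$'' case, which genericity handles), but \emph{curved} quasianalytic plots, e.g.\ graphs $(t,g(t),\ldots)$ with $g\in\cE^{\{M\}}$ quasianalytic but \emph{not} real analytic: such a $g$ has finite-order zeros, so your order-of-vanishing reduction does not exclude it, yet its higher-order behaviour is flexible enough to shadow a subsequence of centres whose directions converge at any rate compatible with the class. No measure or category argument on the sphere can control this continuum of curved plots; what is needed is a \emph{transcendence} statement saying that no lower dimensional quasianalytic plot can track the chosen centres, simultaneously for every infinite subsequence.

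This is precisely where the paper does the opposite of spreading: all the singular centres $a_k$ are placed on a \emph{single} arc $\Ph(t)=(t,\vh(t),\vh_{[2]}(t),\ldots,\vh_{[n-1]}(t))$, each coordinate infinitely flat with respect to the previous one, at the tuned depths $(a_k)_n=M_k^{-1/(4k)}$. The key geometric input (\Cref{lem:resolution}) is proved by Bierstone--Milman resolution of singularities in quasianalytic classes: after pullback, the components of a lower dimensional plot become monomials times units with totally ordered exponents, forcing two coordinates to satisfy $|p_i(x)|^d\le C\,|p_j(x)|$ and $|p_j(x)|\le C\,|p_i(x)|$ --- a polynomial comparability that the infinitely flat relation $\vh_{[j-1]}=\vh_{[j-i]}\circ\vh_{[i-1]}$ along the arc violates, yielding $\on{dist}(\Ph(t),p(L))\ge \vh_{[n-1]}(t)$ for small $t$ (note the subtlety that resolution is applied in the derivative-closed class $\bigcup_k \cE^{\{M^{+k}\}}$, since $\cE^{\{M\}}$ need not be stable under differentiation). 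The quantitative \Cref{lem:estimate} (which is why the paper uses Jaffe's pieces from \Cref{lem:Jaffe52}, with polynomial bounds $|x-a_\ell|^{-2(|\al|+1)}$ calibrated against the depth $M_k^{-1/(4k)}$, rather than Gaussians) and the Fa\`a di Bruno estimate then close the argument. Your Gaussian bookkeeping could likely be made to work \emph{given} such a placement lemma, but without a substitute for \Cref{lem:resolution} --- some resolution-of-singularities or comparable transcendence input --- the proof does not close, and the placement philosophy you sketch points away from, not toward, it.
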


The following Beurling version is an easy consequence; $\cE^{(M)}$-plots are defined in analogy to $\cE^{\{M\}}$-plots.

\begin{theorem} \label{thm:Beurling}
	Let $M=(M_k)$ be a quasianalytic regular weight sequence. 
	For any $n \ge 2$ and any positive sequence $N=(N_k)$
	there exists a $\cC^\infty$-function $f$ on $\R^n$
	of class $\cE^{(M)}$ on $\R^n \setminus \{0\}$ which does not belong to $\cE^{\{N\}}(\R^n)$, but
	$f \o p \in \cE^{(M)}$ for all lower dimensional $\cE^{(M)}$-plots $p$ in $\R^n$.
\end{theorem}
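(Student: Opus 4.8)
The plan is to reduce everything to Theorem~\ref{thm:main} applied to the ``square root'' sequence $\sqrt M := (M_k^{1/2})_k$. First I would check that $\sqrt M$ is again a quasianalytic regular weight sequence: log-convexity of $\sqrt M$ is literally the log-convexity $M_k^2\le M_{k-1}M_{k+1}$ of $M$, the normalization $M_0^{1/2}=1\le M_1^{1/2}$ is immediate, and $(M_k^{1/2})^{1/k}=(M_k^{1/k})^{1/2}\to\infty$. For quasianalyticity, observe that (1)--(2) give $M_k/M_{k-1}\ge M_1\ge 1$, so $M$ is nondecreasing; hence $M_k/M_{k+1}\le 1$ and $(M_k/M_{k+1})^{1/2}\ge M_k/M_{k+1}$, whence $\sum_k \frac1{k+1}(M_k/M_{k+1})^{1/2}\ge\sum_k\frac{M_k}{(k+1)M_{k+1}}=\infty$, so $\sqrt M$ inherits quasianalyticity from $M$. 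Applying Theorem~\ref{thm:main} with the sequence $\sqrt M$ (and the given $N$) then produces a function $f\in\cC^\infty(\R^n)$ of class $\cE^{\{\sqrt M\}}$ on $\R^n\setminus\{0\}$, not belonging to $\cE^{\{N\}}(\R^n)$, and with $f\o p\in\cE^{\{\sqrt M\}}$ for all lower dimensional $\cE^{\{\sqrt M\}}$-plots $p$.

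Two of the three required properties are then immediate from the inclusions recorded in the introduction. Since $\cE^{\{\sqrt M\}}\subseteq\cE^{(M)}$ (part of the chain $\cC^\om\subseteq\cE^{(\sqrt M)}\subseteq\cE^{\{\sqrt M\}}\subseteq\cE^{(M)}$), the function $f$ is of class $\cE^{(M)}$ on $\R^n\setminus\{0\}$; and the assertion $f\notin\cE^{\{N\}}(\R^n)$ is exactly what Theorem~\ref{thm:main} delivers. It remains to upgrade the plot statement from $\cE^{\{\sqrt M\}}$-plots to arbitrary lower dimensional $\cE^{(M)}$-plots, and this is where the real work lies, since $\cE^{(M)}$-plots form a strictly larger class than $\cE^{\{\sqrt M\}}$-plots.

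So let $p$ be a lower dimensional $\cE^{(M)}$-plot. On the open set $\{p\ne 0\}$ there is nothing to do: there $f$ is of class $\cE^{\{\sqrt M\}}\subseteq\cE^{(M)}$, $p$ is of class $\cE^{(M)}$, and stability of $\cE^{(M)}$ under composition gives $f\o p\in\cE^{(M)}$ locally. The only difficulty is near a point $t_0$ with $p(t_0)=0$, and here I would revisit the Fa\`a di Bruno estimate that underlies the composition part of Theorem~\ref{thm:main}. The point of working with $\sqrt M$ is that the derivatives of $f$ near the origin are governed by $\sqrt M$, while the derivatives of the inner map $p$ satisfy the $\cE^{(M)}$-bounds, i.e.\ are controlled by $\rh^{|\al|}|\al|!M_{|\al|}$ for every $\rh>0$; feeding both into the Fa\`a di Bruno expansion, the ``square-root margin'' $\sqrt M\cdot\sqrt M\sim M$ is exactly what is needed to absorb the $\cE^{(M)}$-growth of $p$ and to conclude that $\|f\o p\|^M_{K,\rh}<\infty$ for every compact $K$ and every $\rh>0$, that is, $f\o p\in\cE^{(M)}$. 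I expect this near-zero estimate to be the main obstacle; everything else is a formal transfer from the Roumieu theorem through the inclusions between the classes.
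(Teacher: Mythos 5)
Your reduction is exactly the paper's: set $L_k:=M_k^{1/2}$, apply \Cref{thm:main} to the sequence $L$, and use $(L_k/M_k)^{1/k}\to 0$, i.e.\ the inclusion $\cE^{\{L\}}\subseteq\cE^{(M)}$, to obtain membership in $\cE^{(M)}$ off the origin together with $f\notin\cE^{\{N\}}(\R^n)$. Your verification that $\sqrt M$ is again a quasianalytic regular weight sequence (via $\sqrt{x}\ge x$ for $x\in[0,1]$ applied to $M_k/M_{k+1}$) is correct and fills in a detail the paper leaves implicit. You also correctly locate the one nontrivial remaining point: lower dimensional $\cE^{(M)}$-plots form a strictly larger class than $\cE^{\{L\}}$-plots, so the plot statement of \Cref{thm:main} does not apply directly.

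At precisely that point, however, your argument has a genuine gap. Near a point $t_0$ with $p(t_0)=0$ you propose to feed into Fa\'a di Bruno the assertion that ``the derivatives of $f$ near the origin are governed by $\sqrt M$'', with the margin $\sqrt M\cdot\sqrt M\sim M$ absorbing the $\cE^{(M)}$-growth of $p$. This assertion is false, and no margin in the weight sequence can repair it: by \Cref{lem:blowup} (applied with the arbitrary sequence $N$), $f$ satisfies no ultradifferentiable bounds whatsoever on any neighborhood of $0$ --- its derivatives blow up along the points $a_k\to 0$ on the arc $A$. Consequently the Fa\'a di Bruno bookkeeping cannot even begin until one knows that the image of $p$ stays quantitatively away from the points $a_k$. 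That is the actual key step, and it is geometric rather than analytic: \Cref{lem:resolution} holds verbatim for lower dimensional $\cE^{(M)}$-plots, as recorded in \Cref{rem:resolution}, because its proof uses only quasianalyticity and the stability properties of the plot class (quasianalytic resolution of singularities). It yields a compact neighborhood $W$ of $t_0$ such that $K:=p(W)$ satisfies $\on{dist}(\Ph(t),K)\ge\vh_{[n-1]}(t)$ for small $t>0$, whence $\on{dist}(a_k,K)\ge L_k^{-1/(4k)}$ for large $k$; then \Cref{lem:estimate} gives $\|f\|^{L}_{K,\rho_0}<\infty$ for some $\rho_0>0$, which by $(L_k/M_k)^{1/k}\to 0$ upgrades to $\|f\|^{M}_{K,\rho}<\infty$ for \emph{every} $\rho>0$, and the Beurling variant of the Fa\'a di Bruno estimate (fix $\si$ small first, then $\rho$), i.e.\ composition stability of $\cE^{(M)}$, concludes $f\o p\in\cE^{(M)}$ near $t_0$. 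So the missing idea is the transfer of the resolution/avoidance lemma to $\cE^{(M)}$-plots; without it your near-zero estimate fails, while the square-root margin only buys the inclusion $\cE^{\{L\}}\subseteq\cE^{(M)}$ that you already used elsewhere.
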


The proofs can be found in \Cref{sec:proofs}.

\begin{remark*}
	The theorems also show that \emph{non-quasianalytic} ultradifferentiability cannot be tested on lower dimensional quasianalytic 
	plots:
	Suppose that $L$ is a non-quasianalytic regular weight sequence, $M \le L$ is a quasianalytic regular weight sequence, 
	and $N$ is an arbitrary positive sequence. 
	By \Cref{thm:main} there is a $\cC^\infty$-function $f$ on $\R^n$ of class $\cE^{\{M\}}$ off $0$ not in $\cE^{\{N\}}(\R^n)$, but 
	of class $\cE^{\{M\}} \subseteq \cE^{\{L\}}$ along every $\cE^{\{M\}}$-plot.
\end{remark*}

\subsection{Braun--Meise--Taylor classes}

Another way to define ultradifferentiable classes which goes back to Beurling \cite{Beurling61} and Bj\"orck \cite{Bjoerck66} and was 
generalized by Braun, Meise, and Taylor \cite{BMT90} is to use weight functions instead of weight sequences. 
By a \emph{weight function} we mean a continuous increasing function $\om : [0,\infty) \to [0,\infty)$ with $\om(0) =0$ 
and 
$\lim_{t \to \infty} \om(t) = \infty$ that satisfies
\begin{enumerate}
   \item $\om(2t) = O(\om(t))$  as  $t \to \infty$, \label{om1}
   \item $\om(t) = O(t)$ as $t \to \infty$, \label{om2}
   \item $\log t = o(\om(t))$ as  $t \to \infty$, and \label{om3}
   \item $\vh(t) := \om(e^t)$  is convex.  \label{om4}	
\end{enumerate}
Consider the \emph{Young conjugate} $\vh^*(t) := \sup_{s\ge 0} \big(st-\vh(s)\big)$, for $t>0$, of $\vh$. 
For compact $K \subseteq U$ and $\rh >0$ consider the seminorm
\[
  \|f\|^\om_{K,\rh} := \sup_{x \in K,\,\al \in \N^n} |\p^\al f(x)| \exp(-\tfrac{1}{\rh} \vh^*(\rh |\al|)), 
  \quad  f \in \cC^\infty(U),
\]
and the ultradifferentiable classes of \emph{Roumieu type} 
\[
	\cE^{\{\om\}}(U) := \{f \in \cC^\infty(U) : \A \text{ compact } K \subseteq U \E \rh >0 : \|f\|^\om_{K,\rh} <\infty\},
\]
and of \emph{Beurling type}  
\[
	\cE^{(\om)}(U) := \{f \in \cC^\infty(U) : \A \text{ compact } K \subseteq U \A \rh >0 : \|f\|^\om_{K,\rh} <\infty\}.
\]
The classes $\cE^{\{\om\}}$ and $\cE^{(\om)}$ are in general not representable by any Denjoy--Carleman class, 
but they are representable (algebraically and topologically) by unions and intersections of 
Denjoy--Carleman classes defined by $1$-parameter families of positive sequences associated with $\om$ \cite{RainerSchindl12}.  
The classes $\cE^{\{\om\}}$ and $\cE^{(\om)}$ are quasianalytic if and only if 
\begin{equation*}
   \int_0^\infty \frac{\om(t)}{1+t^2} \, dt =\infty.
 \end{equation*}

If $\si$ is another weight sequence then $\cE^{\{\om\}} \subseteq \cE^{\{\si\}}$ 
and $\cE^{(\om)} \subseteq \cE^{(\si)}$
if and only if $\si(t) = O(\om(t))$ as $t \to \infty$.
The inclusion $\cE^{\{\om\}} \subseteq \cE^{(\si)}$ holds if and only if $\si(t) = o(\om(t))$ as $t \to \infty$.
For details see e.g.\ \cite{RainerSchindl12}. 
Thus $\om$ and $\si$ are called \emph{equivalent} if $\si(t) = O(\om(t))$ and $\om(t) = O(\si(t))$ as $t \to \infty$.

We will assume that the weight function $\om$ satisfies $\om(t) = o(t)$ as $t \to \infty$ 
which is equivalent to the strict inclusion $\cC^\om = \cE^{\{t\}} \subsetneq \cE^{(\om)}$. 
If $\om$ is equivalent to a concave weight function, then the classes $\cE^{\{\om\}}$ and $\cE^{(\om)}$ are  
stable under composition and 
admit a version of the inverse function theorem (and conversely, see \cite[Theorem 11]{Rainer:aa}). 
They are always stable by derivation.

We shall prove in \Cref{sec:proofs}:

\begin{theorem} \label{thm:main2}
	Let $\om$ be a quasianalytic concave weight function such that $\om(t) = o(t)$ as $t \to \infty$.
	For any $n \ge 2$ and any positive sequence $N=(N_k)$
	there exists a $\cC^\infty$-function $f$ on $\R^n$ 
	of class $\cE^{\{\om\}}$ on $\R^n \setminus \{0\}$
	which does not belong to $\cE^{\{N\}}(\R^n)$, but
	$f \o p \in \cE^{\{\om\}}$ for all lower dimensional $\cE^{\{\om\}}$-plots $p$ in $\R^n$. 
\end{theorem}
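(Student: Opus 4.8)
The plan is to reduce \Cref{thm:main2} to \Cref{thm:main} by representing the Braun--Meise--Taylor class $\cE^{\{\om\}}$ through a one-parameter family of Denjoy--Carleman classes. Writing $\vh(t)=\om(e^t)$ and letting $\vh^*$ be its Young conjugate, set
\[
  W^\la_k := \exp\Big(\tfrac1\la\,\vh^*(\la k)\Big),\qquad \la>0,
\]
so that, by \cite{RainerSchindl12}, $\cE^{\{\om\}}(U)=\bigcup_{\la>0}\cE^{\{W^\la\}}(U)$ locally on compacta, the family being totally ordered by inclusion. First I would check that each $W^\la$ is a \emph{quasianalytic regular weight sequence}: convexity of $\vh^*$ (automatic for a Young conjugate, and reflecting concavity of $\om$) gives logarithmic convexity of $W^\la$; the normalization $\om(0)=0$ gives $W^\la_0=1\le W^\la_1$; the hypothesis $\om(t)=o(t)$, being equivalent to the strict inclusion $\cC^\om\subsetneq\cE^{(\om)}$, forces $(W^\la_k)^{1/k}\to\infty$; and since $\cE^{\{W^\la\}}\subseteq\cE^{\{\om\}}$ with the latter quasianalytic, the Taylor map stays injective on $\cE^{\{W^\la\}}$, so each $W^\la$ is quasianalytic.

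With this in hand I would fix one parameter, say $M:=W^1$, and apply \Cref{thm:main} to the quasianalytic regular weight sequence $M$, the given $n\ge2$, and the given sequence $N$. This produces $f\in\cC^\infty(\R^n)$ that is of class $\cE^{\{M\}}$ on $\R^n\setminus\{0\}$, does not lie in $\cE^{\{N\}}(\R^n)$, and satisfies $f\o q\in\cE^{\{M\}}$ for every lower dimensional $\cE^{\{M\}}$-plot $q$. The first two conclusions transfer verbatim: $\cE^{\{M\}}=\cE^{\{W^1\}}\subseteq\cE^{\{\om\}}$ yields $f\in\cE^{\{\om\}}(\R^n\setminus\{0\})$, while $f\notin\cE^{\{N\}}(\R^n)$ is literally the same assertion.

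It remains to prove $f\o p\in\cE^{\{\om\}}$ for every lower dimensional $\cE^{\{\om\}}$-plot $p$, and this is where the main obstacle lies. On a fixed compact set in the domain, $p$ is of class $\cE^{\{W^\la\}}$ for some $\la>0$. If $\cE^{\{W^\la\}}\subseteq\cE^{\{M\}}$, then $p$ is a $\cE^{\{M\}}$-plot and \Cref{thm:main} applies directly, giving $f\o p\in\cE^{\{M\}}\subseteq\cE^{\{\om\}}$. The difficulty is the complementary regime $\cE^{\{M\}}\subsetneq\cE^{\{W^\la\}}$, where $p$ lies in a strictly larger class than the one used to build $f$: away from $p^{-1}(0)$ the composite is controlled because $f$ is there also of class $\cE^{\{W^\la\}}$ and each $\cE^{\{W^\la\}}$ is composition-stable (being regular), but near $p^{-1}(0)$, where $f$ is genuinely bad, the literal conclusion of \Cref{thm:main} is unavailable. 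To close this gap I would revisit the near-origin composition estimate inside the proof of \Cref{thm:main}, which is a Fa\`a di Bruno bound driven by the lower dimensionality and quasianalyticity of the plot rather than by the precise weight sequence; I expect it to be robust in the parameter, placing $f\o p$ in $\cE^{\{W^\mu\}}$ for a finite $\mu=\mu(\la)$ produced by the self-composition property of the matrix $(W^\la)_{\la>0}$, which is exactly what concavity of $\om$ guarantees. Since any finite $\mu$ gives $\cE^{\{W^\mu\}}\subseteq\cE^{\{\om\}}$, this would finish the proof, and verifying the uniformity of the near-origin estimate across the matrix is the crux of the argument.
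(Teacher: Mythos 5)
Your reduction via the weight matrix $W^\la$ and the choice $M:=W^1$ matches the paper's first step (the paper invokes \cite{Rainer:aa} for a family $\fM$ of quasianalytic regular weight sequences representing $\cE^{\{\om\}}$ and fixes one $M\in\fM$), and the transfer of the two properties of $f$ itself ($f\in\cE^{\{M\}}\subseteq\cE^{\{\om\}}$ off $0$, $f\notin\cE^{\{N\}}(\R^n)$) is fine. But your argument is incomplete at exactly the point you yourself flag as ``the crux'': for a plot $p$ of class $\cE^{\{W^\la\}}$ with $\cE^{\{M\}}\subsetneq\cE^{\{W^\la\}}$ you offer only the expectation that the near-origin estimate is ``robust in the parameter'' and yields some $\cE^{\{W^{\mu(\la)}\}}$; nothing is proved there, so this is a genuine gap. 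Moreover the proposed repair looks in the wrong place: the issue is not a uniformity-in-$\la$ problem in the Fa\'a di Bruno bound near $p^{-1}(0)$, and no dichotomy between $\la\le 1$ and $\la>1$ is needed at all.

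What closes the gap in the paper is \Cref{rem:resolution}: \Cref{lem:resolution} is not tied to $\cE^{\{M\}}$-plots but holds for lower dimensional plots in \emph{any} quasianalytic class containing polynomials and stable under composition, differentiation, division by coordinates, and admitting an inverse function theorem --- in particular for $\cE^{\{\om\}}$-plots, since $\om$ is concave. The conclusion of that lemma is purely geometric: the compact image $K=p(L)$ satisfies $\on{dist}(a_k,K)\ge M_k^{-1/(4k)}$ for all large $k$, independently of which parameter $\la$ the plot lives in. Then \Cref{lem:estimate} yields $\|f\|^M_{K,\rh}<\infty$, i.e.\ uniform derivative bounds for $f$ on $K$ in the \emph{single fixed sequence} $M$, valid even when $0\in K$: the ``genuinely bad'' behavior of $f$ at the origin is seen only along the arc through the points $a_k$, which no lower dimensional quasianalytic plot can shadow --- so your premise that the composite is uncontrolled near $p^{-1}(0)$ is mistaken. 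Hence $f$ is of class $\cE^{\{M\}}\subseteq\cE^{\{\om\}}$ along the image of $p$, and since $\cE^{\{\om\}}$ is stable under composition (again by concavity of $\om$), $f\o p\in\cE^{\{\om\}}$. The parameter $\la$ enters only through $p$ and never needs to be compared with $M$, so the function $\mu(\la)$ and the uniformity verification you postpone are unnecessary once the resolution and estimate lemmas are applied in this class-independent form.
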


\begin{theorem} \label{thm:Beurling2}
	Let $\om$ be a quasianalytic concave weight function such that $\om(t) = o(t)$ as $t \to \infty$.
	For any $n \ge 2$ and any positive sequence $N=(N_k)$
	there exists a $\cC^\infty$-function $f$ on $\R^n$ 
	of class $\cE^{(\om)}$ on $\R^n \setminus \{0\}$ which does not belong to $\cE^{\{N\}}(\R^n)$, but
	$f \o p \in \cE^{(\om)}$ for all lower dimensional $\cE^{(\om)}$-plots $p$ in $\R^n$.
\end{theorem}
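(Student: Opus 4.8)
The plan is to derive Theorem~\ref{thm:Beurling2} from the Roumieu result Theorem~\ref{thm:main2} by a standard reduction between Beurling and Roumieu classes, mirroring the way Theorem~\ref{thm:Beurling} follows from Theorem~\ref{thm:main}. The key observation is that a Beurling class $\cE^{(\om)}$ is, roughly speaking, an intersection of Roumieu classes associated with weight functions slightly smaller than $\om$: since $\cE^{\{\si\}} \subseteq \cE^{(\om)}$ whenever $\si(t) = o(\om(t))$, it suffices to produce a single weight function $\si$ with $\si(t) = o(\om(t))$ that is still quasianalytic, concave, and satisfies $\si(t) = o(t)$, and then apply the Roumieu theorem to $\si$.

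First I would construct such a $\si$. Given the quasianalytic concave weight $\om$ with $\om(t) = o(t)$, I want $\si \preceq \om$ strictly (i.e.\ $\si(t) = o(\om(t))$) while retaining all four defining properties of a weight function, concavity, quasianalyticity $\int_0^\infty \si(t)/(1+t^2)\,dt = \infty$, and $\si(t)=o(t)$. A natural candidate is $\si(t) := \om(t)/\log(\om(t)+e)$ or a similar logarithmic damping; one checks that dividing a concave quasianalytic weight by a slowly growing factor preserves quasianalyticity (the integral still diverges because the extra factor is only logarithmic) and that $\si(t)=o(\om(t))$ holds by construction, while $\si(t)=o(t)$ is inherited from $\om(t)=o(t)$. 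Verifying that $\si$ remains (equivalent to) a concave weight function and satisfies conditions~(\ref{om1})--(\ref{om4}) is the routine but slightly delicate part.

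Next I would apply Theorem~\ref{thm:main2} to the weight $\si$ and the given sequence $N$, obtaining a $\cC^\infty$-function $f$ on $\R^n$ that is of class $\cE^{\{\si\}}$ on $\R^n\setminus\{0\}$, does not belong to $\cE^{\{N\}}(\R^n)$, and satisfies $f\o p \in \cE^{\{\si\}}$ for all lower dimensional $\cE^{\{\si\}}$-plots $p$. The final step is to upgrade these Roumieu statements to the desired Beurling statements via the inclusion $\cE^{\{\si\}} \subseteq \cE^{(\om)}$: this immediately gives that $f$ is of class $\cE^{(\om)}$ off $0$. For the plot condition one must also check that every $\cE^{(\om)}$-plot $p$ is in particular a $\cE^{\{\si\}}$-plot, which again follows from $\cE^{(\om)} \subseteq \cE^{\{\si\}}$ (valid because $\si(t)=o(\om(t))$ certainly gives $\si(t)=O(\om(t))$); hence $f\o p \in \cE^{\{\si\}} \subseteq \cE^{(\om)}$. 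The non-membership in $\cE^{\{N\}}(\R^n)$ carries over unchanged.

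The main obstacle I anticipate is the construction and verification of the intermediate weight $\si$: one must simultaneously preserve concavity, the technical growth axioms, the strict domination $\si(t)=o(\om(t))$, and quasianalyticity. Quasianalyticity is the subtlest, since shrinking $\om$ risks making the defining integral converge; the logarithmic damping is chosen precisely so that the divergence survives, and making this quantitative is where the real work lies. Everything else is a formal transfer through the inclusion relations already recorded in the excerpt.
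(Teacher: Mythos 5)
You have hit a genuine gap, and it is the direction of the inclusion relations for weight functions: for weight functions, \emph{smaller weight means larger class}. As recorded in the paper, $\cE^{\{\om\}} \subseteq \cE^{(\si)}$ holds if and only if $\si(t) = o(\om(t))$; equivalently, the inclusion you need, $\cE^{\{\si\}} \subseteq \cE^{(\om)}$, holds if and only if $\om(t) = o(\si(t))$, i.e.\ $\si$ must grow \emph{faster} than $\om$, not slower. With your choice $\si(t) = o(\om(t))$ one gets the strict chain $\cE^{(\om)} \subsetneq \cE^{\{\om\}} \subseteq \cE^{(\si)} \subseteq \cE^{\{\si\}}$, so your final upgrade $f \circ p \in \cE^{\{\si\}} \subseteq \cE^{(\om)}$ fails. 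Worse, your two transfer steps are jointly unsatisfiable by \emph{any} choice of $\si$: you simultaneously require $\cE^{(\om)} \subseteq \cE^{\{\si\}}$ (so that every $\cE^{(\om)}$-plot is an $\cE^{\{\si\}}$-plot and \Cref{thm:main2} applies to it) and $\cE^{\{\si\}} \subseteq \cE^{(\om)}$ (to transfer the conclusion), which together would force $\cE^{\{\si\}} = \cE^{(\om)}$ --- impossible, since one of the two strict chains above always intervenes. If you correct the direction and take $\si$ between $\om$ and $t$ (e.g.\ $\si = \sqrt{t\,\om(t)}$, which is concave, quasianalytic by comparison since $\si \ge \om$ eventually, and $o(t)$), the first half works ($f$ is $\cE^{\{\si\}} \subseteq \cE^{(\om)}$ off $0$), but then $\cE^{(\om)}$-plots are no longer $\cE^{\{\si\}}$-plots and the composition statement of \Cref{thm:main2} simply does not apply to them. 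Incidentally, your candidate damping is also false on its own terms: for $\om(t) = t/\log t$ (quasianalytic), $\si(t) = \om(t)/\log(\om(t)+e) \sim t/\log^2 t$ gives $\int_0^\infty \si(t)/(1+t^2)\,dt < \infty$, so the logarithmic damping \emph{can} destroy quasianalyticity.

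The paper escapes exactly this trap by not using the Roumieu theorem as a black box. It represents $\cE^{(\om)}$ as an intersection $\bigcap_{x>0} \cE^{(M^x)}$ of Beurling classes given by a one-parameter family of quasianalytic \emph{weight sequences}, constructs by a diagonal argument (\Cref{lem:Beurling2}) a single quasianalytic regular weight sequence $L$ with $(L_k/M^x_k)^{1/k} \to 0$ for all $x$, hence $\cE^{\{L\}} \subseteq \cE^{(\om)}$, and applies \Cref{thm:main} to $L$. For a lower dimensional $\cE^{(\om)}$-plot $p$ --- which need not be an $\cE^{\{L\}}$-plot --- it goes back \emph{inside} the proof of \Cref{thm:main}: \Cref{lem:resolution} is valid for plots of any quasianalytic class with the requisite stability properties (\Cref{rem:resolution}; concavity of $\om$ guarantees these for $\cE^{(\om)}$), so the image of $p$ keeps the required distance from the singular points $a_k$; \Cref{lem:estimate} then shows $f$ satisfies $\cE^{\{L\}}$-bounds, hence is of class $\cE^{(\om)}$, on a neighborhood of that image; and stability of $\cE^{(\om)}$ under composition yields $f \circ p \in \cE^{(\om)}$. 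Any repair of your reduction must do something equivalent: the formal inclusion calculus alone cannot bridge the Roumieu composition statement and the Beurling plot condition.
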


$\cE^{\{\om\}}$- and $\cE^{(\om)}$-plots are defined in analogy to $\cE^{\{M\}}$-plots.

\subsection{New quasianalytic classes}

	Let us turn the conditions of the theorems into a definition. 

Let $M=(M_k)$ be any quasianalytic regular weight sequence and 
let $\om$ be any quasianalytic concave weight function with $\om(t) = o(t)$ as $t \to \infty$. 
In the following $\star$ stands for either $\{M\}$, $(M)$, $\{\om\}$, or $(\om)$.

Let $\bar{\cA}_1^{\star}(\R^n)$ be the set of 
all $\cC^\infty$-functions $f$ on $\R^n$ such that $f$ is of class $\cE^{\star}$ along all affine lines in $\R^n$. 
Then $\bar \cA_1^{\star}(\R^n)$ is quasianalytic in the sense that $T_a f = 0$ implies $f=0$ for any $a \in \R^n$.
Indeed, if $f$ is infinitely flat at $a$, then so is the restriction of $f$ to any line $\ell$ through $a$. 
Since the class $\cE^\star$ is quasianalytic, $f|_\ell = 0$ for every line $\ell$ through $a$ and thus $f=0$ on $\R^n$. 
On the other hand $\bar \cA_1^{\star}(\R^n)$ contains $\cE^{\star}(\R^n)$ but is not contained in any 
Denjoy--Carleman class whatsoever, by \Cref{thm:main,thm:Beurling,thm:Beurling2,thm:main2}. 

There are many ways to modify the definition: 
Let $U$ be an open subset of an Euclidean space.
If $\cA_m^{\star}(U)$ is the set of 
all $\cC^\infty$-functions $f$ on $U$ such that $f$ is of class $\cE^{\star}$ along all $\cE^{\star}$-plots in $U$ 
of dimension $m$,
then $\cA_m^{\star}(U)$ is quasianalytic and stable under composition.
Thus $\cA_m^{\star}$-mappings between open subsets of Euclidean spaces form a quasianalytic category askew to all 
Denjoy--Carleman classes. We have strict inclusions
\[
	\cE^{\star}(\R^n) = \cA_n^{\star}(\R^n) \subsetneq \cA_{n-1}^{\star}(\R^n) \subsetneq \cdots \subsetneq \cA_{1}^{\star}(\R^n). 
\]
Indeed the first inclusion is strict by the theorem proved in this paper. That the other inclusions are strict follows 
immediately: if $f \in \cA_{n-1}^{\star}(\R^n) \setminus \cA_{n}^{\star}(\R^n)$ then
$\tilde f(x_1,\dots,x_n,x_{n+1}, \ldots,x_{n+k}):= f(x_1,\dots,x_n) \in \cA_{n-1}^{\star}(\R^{n+k}) \setminus \cA_{n}^{\star}(\R^{n+k})$ 
for all $k \ge 1$.

None of the categories $\cA_m^{\star}$ is cartesian closed:
\[
	\cA_m^{\star}(\R^m,\cA_m^{\star}(\R^m)) \ne \cA_m^{\star}(\R^m \times \R^m) \quad (\text{via } f(x)(y) \mapsto f^\wedge(x,y)).
\] 
In fact, 
the left-hand side equals $\cE^{\star}(\R^m,\cE^{\star}(\R^m))$ and is contained in $\cE^{\star}(\R^m \times \R^m)$, 
by \cite[Theorem 5.2]{KMRu} and \cite{Schindl14a}, which in turn is strictly included in the right-hand side.

Each $\cA_m^{\star}$ is closed under reciprocals: if $f \in \cA^\star_m$ and $f(0) \ne 0$ then $1/f \in \cA^\star_m$ on a 
neighborhood of $0$. This follows from stability under composition and the fact that $x \mapsto 1/x$ is real analytic off $0$. 

Suppose that $\cE^\star$ is stable under differentiation. If $f \in \cA^\star_m$ then $d_v^k f \in \cA^\star_{m-1}$ for all 
$m\ge 2$, all vectors $v$, and all $k$, thus also $\p^\al f \in \cA^\star_{m-1}$ for all multi-indices $\al$. Indeed, if 
$p$ is a $\cE^*$-plot of dimension $m-1$, then
\[
	 d_v^k f(p(s) + t v) =  \p_t^k \big( f(p(s) + t v) \big)
\] 
is of class $\cE^\star$ in $s$ for all $t$, since $(s,t) \mapsto p(s) + t v$ is an $\cE^\star$-plot of dimension $m$ and 
$\cE^\star$ is stable under differentiation. 

Another interesting stability property of $\cA^\star_1$ and $\bar \cA^\star_1$, under the assumption 
that $\cE^\star$ is stable under differentiation, is the following: 
Assume that the coefficients of a polynomial 
\[
	\vh(x,y) = y^d + a_1(x) y^{d-1} + \cdots + a_d(x)
\] 
are germs of $\cA^\star_1$ (resp.\ $\bar \cA^\star_1$) functions at $0$ in $\R^n$ and $h$ is germ of a $\cC^\infty$-function at $0$ 
such that $\vh(x,h(x))=0$. Then $h$ is actually also a germ of a $\cA^\star_1$ (resp.\ $\bar \cA^\star_1$) function. 
This follows immediately from the case $n=1$ due to \cite{Thilliez10}; in this reference only the case $\star=\{M\}$ was treated, 
but the arguments apply to all cases. It seems to be unknown whether a similar result holds for $\cE^\star$ and $n>1$, but 
see \cite{Belotto-da-Silva:2017aa}.

\section{Proofs} \label{sec:proofs}

\subsection{Proof of \texorpdfstring{\Cref{thm:main}}{Theorem 1}}
The proof is based on a construction due to Jaffe \cite{Jaffe16}.

\begin{lemma}[{\cite[Proposition 5.2]{Jaffe16}}] \label{lem:Jaffe52}
	Let $M$ be a regular weight sequence.
	For any integer $n\ge 2$ there exists a function $f \in \cE^{\{M\}}(\R^n)$ with the following properties: 
	there is a constant $B=B(n)$ such that for all compact $K \subseteq \R^n$ and all $\al \in \N^n$ 	
	\begin{align*}
		|\p^\al f(x)| &\le B^{|\al|} (|K|+1)^{|\al|} |\al|! M_{|\al|} \quad \text{ for all }  x \in K,
		\\
		|\p^\al f(x)| &\le B^{|\al|} (|K|+1)^{|\al|} |\al|! \big(1+ |x|^{-2(|\al|+1)}\big) \quad \text{ for all }  
		x \in K \setminus \{0\},
	\end{align*}
	and for all $k \ge 1$ and $i=1,\ldots,n$
	\[
		\Big|\frac{\p^{2k} f}{\p x_i^{2k}}(0)\Big| \ge  \frac{(2k)! M_{k}}{2^k}.
	\]
	Here $|K| := \sup_{x \in K} |x|$.	
\end{lemma}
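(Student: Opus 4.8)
The plan is to build $f$ as a superposition of rescaled copies of a single analytic profile concentrated at the origin. For a scale $\la>0$ set
\[
	\Phi_\la(x) := \frac{1}{\la^2 + |x|^2}, \qquad |x|^2 = x_1^2 + \cdots + x_n^2 ,
\]
which is real analytic on $\R^n$, its complex singularities $\{\sum z_i^2 = -\la^2\}$ lying at Euclidean distance $\sqrt{\la^2+|x|^2}$ from a real point $x$. Restricting to a coordinate axis, $(\la^2+t^2)^{-1} = \sum_k (-1)^k \la^{-2k-2} t^{2k}$, so $\p^{2k}_{x_i}\Phi_\la(0) = (-1)^k (2k)!\,\la^{-2k-2}$; and Cauchy estimates on a polydisc of radius $\tfrac12\sqrt{\la^2+|x|^2}$ give a constant $C=C(n)$ with
\[
	|\p^\al \Phi_\la(x)| \le C^{|\al|} |\al|!\, (\la^2+|x|^2)^{-|\al|/2-1} .
\]
I would then fix scales $\la_j$ and weights $c_j>0$ and put $f := \sum_{j\ge 1} c_j\, \Phi_{\la_j}$, the block $\Phi_{\la_j}$ being responsible for the lower bound at order $2j$, while the whole series must stay globally of class $\cE^{\{M\}}$.

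The weights are forced by the lower bound: all blocks contribute to $\p^{2k}_{x_i}f(0)$ with the same sign $(-1)^k$, so there is no cancellation and it suffices that the single term $j=k$ reach the target. Demanding $c_k \la_k^{-2k-2} = M_k 2^{-k}$ gives $c_j := M_j\,2^{-j}\,\la_j^{2j+2}$ and then $|\p^{2k}_{x_i}f(0)| \ge (2k)!\,c_k\la_k^{-2k-2} = (2k)!\,M_k 2^{-k}$, which is the required estimate. With this choice the two upper bounds reduce, via the Cauchy estimate and $(\la_j^2+|x|^2)^{-|\al|/2-1}\le \la_j^{-|\al|-2}$, to controlling
\[
	S(m) := \sum_{j\ge 1} c_j\, \la_j^{-m-2} = \sum_{j\ge 1} M_j\, 2^{-j}\, \la_j^{2j-m} \qquad (m=|\al|).
\]

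The crux is to choose the scales so that $S(m)\le \text{const}^{\,m} M_m$ for every $m$; this single estimate yields $\cC^\infty$-convergence, membership in $\cE^{\{M\}}(\R^n)$, and the first displayed bound uniformly in $x$. The tension is that each lower bound needs a block at an ever smaller scale $\la_j\to 0$, whereas small scales inflate the high-order derivatives of the lower-indexed blocks. I would resolve this by the ``Legendre-dual'' choice
\[
	\la_j := M_{2j}^{-1/(2j)}, \qquad\text{so that}\qquad S(m) = \sum_{j\ge 1} 2^{-j}\, \frac{M_j}{M_{2j}}\, \big(M_{2j}^{1/(2j)}\big)^{m}.
\]
Writing $g(\ell) := \tfrac1\ell \log M_\ell$, which is increasing precisely because $M$ is logarithmically convex, a short case analysis (comparing $2j$ with $m$, and $j$ with $m$) shows that each term is at most $2^{-j} M_m$; hence $S(m)\le 2 M_m$, with no dependence on $n$ or on quasianalyticity. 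The off-origin bound then follows by splitting the sum at $\la_j=|x|$: blocks with $\la_j\le |x|$ contribute $\le C^{|\al|}|\al|!\,|x|^{-|\al|-2}\sum_j c_j$, while those with $\la_j>|x|$ satisfy $M_{2j}^{1/(2j)}<|x|^{-1}$, so their contribution is $\le C^{|\al|}|\al|!\,|x|^{-|\al|}\sum_j 2^{-j}$; together these give the claimed estimate (in fact with the stronger exponent $|x|^{-|\al|-2}$).

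I expect the main obstacle to be exactly the uniform estimate $S(m)\le \text{const}^{\,m} M_m$: this is where the construction lives or dies, and the only place where the structural hypotheses on $M$ (logarithmic convexity and $M_k^{1/k}\to\infty$) are genuinely used. Everything else — the Cauchy estimates for $\Phi_\la$, the termwise differentiation legitimised by the convergence of $S(m)$, and the bookkeeping of constants needed to match the precise form $B^{|\al|}(|K|+1)^{|\al|}$ — is routine once the scales $\la_j = M_{2j}^{-1/(2j)}$ are in place.
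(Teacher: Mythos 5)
Your construction is correct, but it implements Jaffe's idea differently from the paper. The paper's proof (following \cite[Proposition 5.2]{Jaffe16}) works in one variable first: it sums complex Cauchy kernels $f(x)=\sum_k 2^{-k}\vh(m_k)^{-1}(x-i/m_k)^{-1}$ with poles $i/m_k$ at the quotient scales $m_k=M_{k+1}/M_k$, obtains the coefficients through the associated function $\vh(t)=\sup_k t^{k+1}/M_k$ via the identity $M_k=m_k^{k+1}/\vh(m_k)$, and only then passes to $\R^n$ by composing with the squared Euclidean norm; the composition step is what produces the $(|K|+1)^{|\al|}$ factors in the statement. You instead superpose real radial bumps $(\la_j^2+|x|^2)^{-1}$ directly in $\R^n$, with root-type scales $\la_j=M_{2j}^{-1/(2j)}$ in place of the quotient scales, solve for the coefficients $c_j$ from the lower-bound constraint rather than via $\vh$, and replace Jaffe's dominant-pole argument by the observation that all blocks contribute to $\p^{2k}_{x_i}f(0)$ with the same sign $(-1)^k$, so no cancellation can occur. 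Your crux inequality, $M_j\,M_{2j}^{m/(2j)-1}\le M_m$ for all $j,m\ge 1$, does hold and your sketch completes: for $m\ge 2j$ it follows from monotonicity of $\ell\mapsto \tfrac1\ell\log M_\ell$ (so $M_{2j}^{m/(2j)}\le M_m$ and $M_j\le M_{2j}$); for $j\le m<2j$ use additionally $M_{2j}\ge M_j^2$ to reduce it to $M_j^{(m-j)/j}\le M_m$, which again follows from $\tfrac1j\log M_j\le\tfrac1m\log M_m$; for $m<j$ use $M_{2j}\ge M_j^2$ and $M_m\ge 1$ to bound the left side by $M_j^{(m-j)/j}\le 1$. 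This gives $S(m)\le 2M_m$ and hence everything you claim. What your route buys: the estimates are uniform in $x$ (no $(|K|+1)^{|\al|}$ factor is needed, since the complex singularities of $\Phi_\la$ recede as $|x|\to\infty$), the off-origin bound comes with the stronger exponent $|x|^{-|\al|-2}$ (which indeed implies $1+|x|^{-2(|\al|+1)}$, checking $|x|\le 1$ and $|x|\ge 1$ separately), and the whole argument is real and self-contained, avoiding the associated-function machinery. What the paper's route buys is a single one-dimensional lemma that Jaffe reuses elsewhere.

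One slip, cosmetic but worth fixing: the distance from a real point $x$ to the singular variety $\{\sum z_i^2=-\la^2\}$ is $\sqrt{\la^2+|x|^2/2}$ for $n\ge 2$, not $\sqrt{\la^2+|x|^2}$ (minimize $|a-x|^2+|b|^2$ over $\langle a,b\rangle=0$, $|b|^2=\la^2+|a|^2$; the minimum is at $a=x/2$). Consequently, for $n\ge 4$ your polydisc of polyradius $\tfrac12\sqrt{\la^2+|x|^2}$ already meets the variety (at $x=0$ take $a=0$, $b=\tfrac{\la}{2}(1,1,1,1,0,\dots,0)$). Taking polyradius $\tfrac{1}{4\sqrt n}\sqrt{\la^2+|x|^2}$ restores the argument: on that polydisc one has $|\la^2+\sum z_j^2|\ge \tfrac{7}{16}(\la^2+|x|^2)$, so the Cauchy estimates go through and only the value of $C(n)$ changes, which the statement permits.
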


It is not hard to see that the fact that $M$ is logarithmically convex, or equivalently, $m_k := M_{k+1}/M_k$ is increasing, implies that 
\[
	M_k =  \frac{m_k^{k+1}}{\vh(m_k)}, \quad \text{ where  } \vh(t) := \sup_{k\ge 0} \frac{t^{k+1}}{M_k}.
\]
This can be used to see that 
\[
	f(x) := \sum_{k=1}^\infty 2^{-k} \vh(m_k)^{-1} \big( x - i/m_k \big)^{-1}
\]
defines a smooth function on $\R$ with $\|f^{(k)}\|_{L^\infty} \le k! M_k$, $|f^{(k)}(x)| \le k!/|x|^{k+1}$ if $x\ne 0$ 
and $|f^{(k)}(0)| \ge k! M_k/2^k$
 for all $k$. Composing $f$ with the squared Euclidean norm in $\R^n$ gives a function with the properties in the lemma.
 For details see \cite{Jaffe16}.

Let $\vh: [0,1] \to [0,1]$ be a strictly monotone infinitely flat smooth surjective function with $\vh(t) \le t$ for all $t \in [0,1]$. 
Let $\vh_{[n]} := \vh \o \vh_{[n-1]}$, $n\ge 1$, with $\vh_{[0]}:= \on{Id}$ denote the iterates of $\vh$. 
Consider the arc 
\[
	A := \big\{\Ph(t) := (t, \vh(t), \vh_{[2]}(t), \cdots, \vh_{[n-1]}(t)) : t \in (0,1)\big\} \subseteq \R^n.
\]
Note that $t \ge \vh(t) \ge \cdots \ge \vh_{[n-1]}(t)$ for all $t$. 

Without loss of generality we may assume that the sequence $M_k^{1/k}$ is \emph{strictly} increasing \cite[Lemma 4.3]{Jaffe16}.
We define a sequence of points $a_k$ in $A$ by fixing the $n$-th coordinate of $a_k$ to 
\[
	(a_k)_n := M_k^{-1/(4k)}.
\]

For each $\ell \in \N_{\ge 1}$ define a sequence $M^{(\ell)} = (M^{(\ell)}_k)$ by 
\[
	M^{(\ell)}_k := 
	\begin{cases}
		1 & \text{ if } 0 \le k < \ell,
		\\
		c_\ell^{2k - 2\ell +1} M_{k} & \text{ if } k \ge \ell,
	\end{cases}
\]
where $c_\ell \ge M_\ell$ are constants to be determined below. 
Notice that each $M^{(\ell)}$ is a regular weight sequence equivalent to $M$.

By \Cref{lem:Jaffe52}, for each $\ell \in \N_{\ge1}$ there is a function $f_\ell \in \cE^{\{M^{(\ell)}\}}(\R^n) = \cE^{\{M\}}(\R^n)$ 
such that for all compact $K \subseteq \R^n$ and all $\al \in \N^n$ 
we have	(for $a := 1 + \sup_\ell |a_\ell|$ )
	\begin{align}
		|\p^\al f_\ell(x)| &\le B^{|\al|} (|K|+a)^{|\al|} |\al|! M^{(\ell)}_{|\al|} \quad \text{ for all }  x \in K, \label{est1}
		\\
		|\p^\al f_\ell(x)| &\le B^{|\al|} (|K|+a)^{|\al|} |\al|! \big(1+|x-a_\ell|^{-2(|\al|+1)} \big) \quad \text{ for all }  
		x \in K \setminus \{a_\ell\}, \label{est2}
	\end{align}
	where $B=B(n)$,
	and for all $k \ge 1$
	\begin{equation} \label{est3}
		\Big|\frac{\p^{2k} f_\ell}{\p x_1^{2k}}(a_\ell)\Big| \ge   \frac{(2k)! M^{(\ell)}_{k}}{2^k}.
	\end{equation}
Define
\[
	f := \sum_{\ell=1}^\infty 2^{-\ell} f_\ell.
\]
It is easy to check that $f$ is $\cC^\infty$ on $\R^n$ and of class $\cE^{\{M\}}$ on $\R^n\setminus \{0\}$.

Note that $f$ depends on the choice of the coefficients $c_\ell$.
Next we will show that, given any positive sequence $N=(N_k)$, we may choose the constants $c_\ell$ and hence $f$ in 
such a way that $f$ does not belong to $\cE^{\{N\}}$ in any neighborhood of the origin.

\begin{lemma} \label{lem:blowup}
	The constants $c_\ell \ge M_\ell$ can be chosen such that for all $k\ge 1$
	\[
		\Big|\frac{\p^{2k} f}{\p x_1^{2k}}(a_k)\Big| \ge  (2k)!  M_{2k} N_{2k}.
	\]
\end{lemma}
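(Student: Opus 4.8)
The plan is to exploit the fact that at the point $a_k$ the single summand $2^{-k} f_k$ dominates, while the contributions of all other $f_\ell$ with $\ell\ne k$ can be bounded \emph{independently of the constants} $c_\ell$. This decoupling is what allows the $c_\ell$ to be fixed one index at a time, with no circularity.

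First I would isolate the diagonal term. Writing
\[
	\frac{\p^{2k} f}{\p x_1^{2k}}(a_k) = 2^{-k}\frac{\p^{2k} f_k}{\p x_1^{2k}}(a_k) + \sum_{\ell\ne k} 2^{-\ell}\frac{\p^{2k} f_\ell}{\p x_1^{2k}}(a_k),
\]
the triangle inequality gives
\[
	\Big|\frac{\p^{2k} f}{\p x_1^{2k}}(a_k)\Big| \ge 2^{-k}\Big|\frac{\p^{2k} f_k}{\p x_1^{2k}}(a_k)\Big| - \sum_{\ell\ne k} 2^{-\ell}\Big|\frac{\p^{2k} f_\ell}{\p x_1^{2k}}(a_k)\Big|.
\]
For the diagonal term I apply the lower bound \eqref{est3} with $\ell=k$; since $M^{(k)}_k = c_k M_k$, this yields $2^{-k}|\p^{2k} f_k/\p x_1^{2k}(a_k)| \ge (2k)!\, c_k M_k / 4^k$, a quantity I can make arbitrarily large by increasing $c_k$.

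Next I would bound the off-diagonal sum using \eqref{est2} rather than \eqref{est1}: this is the crucial point, because \eqref{est2} carries no weight sequence and is therefore completely independent of the choice of the $c_\ell$. Fix a compact $K$ (say a ball) containing all $a_\ell$ and $0$, so that $|K|+a$ is an absolute constant. Since the coordinates $(a_\ell)_n = M_\ell^{-1/(4\ell)}$ are strictly decreasing to $0$, the points $a_\ell$ are pairwise distinct and $a_\ell \to 0$, hence for each fixed $k$ the number $d_k := \inf_{\ell\ne k} |a_k - a_\ell|$ is strictly positive. Applying \eqref{est2} at $x=a_k\ne a_\ell$ and using $\sum_{\ell\ge 1} 2^{-\ell}=1$ then gives a finite bound
\[
	\sum_{\ell\ne k} 2^{-\ell}\Big|\frac{\p^{2k} f_\ell}{\p x_1^{2k}}(a_k)\Big| \le B^{2k}(|K|+a)^{2k}(2k)!\,\big(1+ d_k^{-2(2k+1)}\big) =: E_k,
\]
where $E_k$ depends only on the fixed data $n, B, K, a$ and the already fixed points $a_\ell$, not on any $c_\ell$.

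Finally I would simply choose the constants large enough. Because $E_k$ is a well-defined finite number independent of all the $c$'s, I can set
\[
	c_k := \max\Big\{ M_k,\ \frac{4^k}{M_k}\Big(M_{2k} N_{2k} + \frac{E_k}{(2k)!}\Big)\Big\},
\]
which respects the standing requirement $c_k \ge M_k$ and forces $(2k)!\, c_k M_k/4^k - E_k \ge (2k)!\, M_{2k} N_{2k}$; combining the three displays yields the claim for every $k\ge 1$. The only step requiring real care, and the reason the argument closes, is precisely the decoupling: one must estimate the tail with the $c_\ell$-free bound \eqref{est2}, so that the definition of each $c_k$ does not implicitly depend on the remaining constants. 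Had one used \eqref{est1} instead, the quantity $E_k$ would involve the $c_\ell$ and the choice of $c_k$ would become circular.
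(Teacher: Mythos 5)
Your proof is correct and follows essentially the same route as the paper: isolate the diagonal term via \eqref{est3} with $M^{(k)}_k = c_k M_k$, bound the off-diagonal sum by the $c_\ell$-free estimate \eqref{est2} using $\inf_{\ell\ne k}|a_k-a_\ell|>0$ (which holds since $M_k^{1/k}$ is strictly increasing), and then take $c_k$ large. Your explicit choice of $c_k$ and your emphasis on the decoupling merely spell out what the paper compresses into ``the assertion follows easily.''
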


\begin{proof}
Since $M^{(k)}_{k}= c_k M_k$, \eqref{est2} and \eqref{est3} give 
\begin{align*}
	\Big|\frac{\p^{2k} f}{\p x_1^{2k}}(a_k)\Big| \ge 4^{-k}  (2k)! c_k M_{k}  - 
	\sum_{\ell \ne k} 2^{-\ell} B^{2k} (|K|+a)^{2k} (2k)! \big(1+|a_k-a_\ell|^{-2(2k+1)} \big).
\end{align*}
The sum on the right-hand side is bounded by a constant (depending on $k$) since the sequence $M_k^{1/k}$ is strictly increasing and hence 
$\inf_{\ell\ne k} |a_k - a_\ell|>0$. The assertion follows easily.
\end{proof}

\Cref{lem:blowup} implies that $f$ cannot be of class $\cE^{\{N\}}$ in any neighborhood of the origin. 
Otherwise there would be constants $C,\rh>0$ such that, for large $k$,
\[
	(2k)!  M_{2k} N_{2k} \le \Big|\frac{\p^{2k} f}{\p x_1^{2k}}(a_k)\Big| \le C \rh^{2k} (2k)!  N_{2k}  
\]
which leads to a contradiction as $M_k^{1/k} \to \infty$.

It remains to show that $f \o p \in \cE^{\{M\}}(V)$ for any $\cE^{\{M\}}$-plot $p : V \to \R^n$, 
where $V \subseteq \R^m$ with $m < n$. 
We will use the following lemma.

\begin{lemma} \label{lem:estimate}
	Let $K \subseteq \R^n \setminus \{a_k\}_k$ be a compact set such that 
	\[
		\on{dist}(a_k, K) \ge M_k^{-1/(4k)} \quad \text{ for all } k > k_0. 
	\]
	Then there exists $\rh>0$ such that $\|f\|^M_{K,\rh} <\infty$. 
	Neither $\rh$ nor $\|f\|^M_{K,\rh}$ depend on the choice of the constants $c_\ell$.
\end{lemma}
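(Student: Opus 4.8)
The plan is to estimate the derivatives of $f=\sum_{\ell=1}^\infty 2^{-\ell}f_\ell$ at points $x\in K$ by splitting the sum over $\ell$ according to whether $\ell>|\al|$ or $\ell\le|\al|$, and to use a \emph{different} one of the two pointwise bounds \eqref{est1} and \eqref{est2} in each range. The guiding observation is that $M^{(\ell)}_{|\al|}$ appearing in \eqref{est1} equals $1$, and is thus independent of $c_\ell$, \emph{exactly} when $|\al|<\ell$, whereas \eqref{est2} is independent of $c_\ell$ for every $\ell$. Hence, to obtain a bound free of the constants $c_\ell$, one should invoke \eqref{est1} only in the range $\ell>|\al|$ and \eqref{est2} in the range $\ell\le|\al|$.

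First I would treat the tail $\ell>|\al|$. There $M^{(\ell)}_{|\al|}=1$, so \eqref{est1} gives $|\p^\al f_\ell(x)|\le B^{|\al|}(|K|+a)^{|\al|}|\al|!$, and summing the geometric factors $2^{-\ell}$ yields a contribution $\le (B(|K|+a))^{|\al|}|\al|!\,M_{|\al|}$, using $M_{|\al|}\ge 1$ (which follows since $M_k^{1/k}$ is increasing and $M_1\ge 1$). This range accounts for the entire case $|\al|=0$. Next I would treat $\ell\le|\al|$ by \eqref{est2}, whose summand splits into a harmless ``$1$''-part, contributing as above, and the singular part $|x-a_\ell|^{-2(|\al|+1)}$, for which I bound $|x-a_\ell|\ge\on{dist}(a_\ell,K)=:d_\ell$. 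For the finitely many indices $\ell\le k_0$ the distances are bounded below by $D:=\min_{1\le\ell\le k_0}\on{dist}(a_\ell,K)>0$ (positive because $K$ is compact and misses the finite set $\{a_1,\dots,a_{k_0}\}$), giving a contribution $\le C(B(|K|+a)D^{-2})^{|\al|}|\al|!\,M_{|\al|}$.

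The technical heart is the remaining range $k_0<\ell\le|\al|$, where the hypothesis gives $d_\ell\ge M_\ell^{-1/(4\ell)}$ and hence the crucial estimate
\[
	d_\ell^{-2(|\al|+1)} \le M_\ell^{(|\al|+1)/(2\ell)} = \big(M_\ell^{1/\ell}\big)^{(|\al|+1)/2} \le \big(M_{|\al|}^{1/|\al|}\big)^{(|\al|+1)/2} = M_{|\al|}^{(|\al|+1)/(2|\al|)} \le M_{|\al|},
\]
where the middle inequality uses that $M_k^{1/k}$ is increasing together with $\ell\le|\al|$, and the last uses $(|\al|+1)/(2|\al|)\le 1$ for $|\al|\ge 1$ and $M_{|\al|}\ge 1$. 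Summing $2^{-\ell}$ then bounds this contribution by $(B(|K|+a))^{|\al|}|\al|!\,M_{|\al|}$ as well.

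Adding the four contributions yields $|\p^\al f(x)|\le C'(\rho')^{|\al|}|\al|!\,M_{|\al|}$ for all $x\in K$ and all $\al$, with $C'$ and $\rho'=B(|K|+a)\max(1,D^{-2})$ depending only on $n$ (via $B$), on $K$ (via $|K|$ and $D$), on the fixed quantity $a$, and on $k_0$, but on \emph{none} of the $c_\ell$. Taking $\rh=\rho'$ gives $\|f\|^M_{K,\rh}\le C'<\infty$, with both $\rh$ and the bound manifestly independent of the $c_\ell$, as asserted. I expect the only genuine subtlety to be the bookkeeping that confines all $c_\ell$-dependence to the trivial range $\ell>|\al|$; the displayed inequality bounding $d_\ell^{-2(|\al|+1)}$ by $M_{|\al|}$ is the decisive step and the single place where the precise exponent $-1/(4k)$ in the definition of $(a_k)_n$ and in the distance hypothesis is used.
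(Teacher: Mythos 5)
Your proof is correct and takes essentially the same route as the paper's: the same decomposition of $\sum_\ell 2^{-\ell}\p^\al f_\ell$ at $\ell=|\al|$, with \eqref{est1} on the tail (where $M^{(\ell)}_{|\al|}=1$ is independent of $c_\ell$) and \eqref{est2} on the range $\ell\le|\al|$, further split at $k_0$. Your displayed chain bounding $d_\ell^{-2(|\al|+1)}$ by $M_{|\al|}$ via the hypothesis $d_\ell\ge M_\ell^{-1/(4\ell)}$ and the monotonicity of $M_k^{1/k}$ is precisely the step the paper leaves implicit, so you have merely made the paper's argument more explicit.
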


\begin{proof}
	For $x \in K$ and $|\al|\ge 1$,
	\begin{align*}
	|\p^\al f(x)| &\le \sum_{\ell=1}^\infty 2^{-\ell} |\p^\al f_\ell(x)| = 
	\sum_{\ell=1}^{|\al|} 2^{-\ell} |\p^\al f_\ell(x)| + \sum_{\ell=|\al|+1}^\infty 2^{-\ell} |\p^\al f_\ell(x)|.  
\end{align*}
	By \eqref{est1} and the definition of $M^{(\ell)}$, the second sum is bounded by $B^{|\al|} (|K|+a)^{|\al|} |\al|!$. 
	For the first sum we have, by \eqref{est2},
	\begin{align*}
		\sum_{\ell=k_0+1}^{|\al|} 2^{-\ell} |\p^\al f_\ell(x)|
		&\le 
		B^{|\al|} (|K|+a)^{|\al|} |\al|! \sum_{\ell=k_0+1}^{|\al|} 2^{-\ell} \big(1+|x-a_\ell|^{-2(|\al|+1)} \big)
		\\
		&\le 
		B^{|\al|} (|K|+a)^{|\al|} |\al|! M_{|\al|} \sum_{\ell=k_0+1}^{|\al|} 1
		\\
		&\le 
		(eB)^{|\al|} (|K|+a)^{|\al|} |\al|! M_{|\al|}.
	\end{align*}
	A similar estimate holds for $\sum_{\ell=1}^{k_0} 2^{-\ell} |\p^\al f_\ell(x)|$ since 
	$\on{dist}(a_k, K) \ge \ep >0$ for all $k\le k_0$. 
\end{proof}

Let $p=(p_1,\ldots,p_n) : V \to \R^n$ be an $\cE^{\{M\}}$-plot, where $V \subseteq \R^m$ is a neighborhood of the origin and $m < n$.

\begin{lemma} \label{lem:resolution}
	There is a compact neighborhood $L \subseteq V$ of $0$ such that $K:=p(L)$ satisfies 
	\begin{equation} \label{claim}
		\on{dist}(\Ph(t),K) \ge \vh_{[n-1]}(t) \quad \text{ for all small } t>0.	
	\end{equation}	 
\end{lemma}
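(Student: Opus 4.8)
The plan is to argue by contradiction and to bring the plot $p$ into a monomial normal form by resolution of singularities, so that the self-similar flatness of the arc can be played off against the dimension count $m<n$. First I would fix a compact ball $L\subseteq V$ centered at $0$ and suppose that the asserted bound fails on $L$: then there are parameters $t_i \downarrow 0$ and points $u_i \in L$ with $|\Ph(t_i)-p(u_i)| < \vh_{[n-1]}(t_i)$. Passing to a subsequence, $u_i \to u_* \in L$, and since $\Ph(t_i)\to 0$ we get $p(u_*)=0$ (if $0\notin p(L)$ there is nothing to prove, as then $\on{dist}(\Ph(t),p(L))$ is bounded below by a positive constant for small $t$). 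Comparing coordinates gives $|p_j(u_i)-\vh_{[j-1]}(t_i)| < \vh_{[n-1]}(t_i) \le \vh_{[j-1]}(t_i)$ for $j=1,\dots,n$. Writing $s_i^{(j)} := \vh_{[j]}(t_i)$, so that $s_i^{(j)}=\vh(s_i^{(j-1)})$, this yields $p_j(u_i)\in(0,2s_i^{(j-1)})$, and even $p_j(u_i)=s_i^{(j-1)}(1+o(1))$ for $j\le n-1$; in particular every $p_j(u_i)>0$.

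Next I would monomialize near $u_*$. By resolution of singularities in the quasianalytic class $\cE^{\{M\}}$ (applied to the product $p_1\cdots p_n$) there are finitely many charts and a proper $\cE^{\{M\}}$-map $\sigma$ onto a neighborhood of $u_*$ such that on each chart $p_j\o\sigma = y^{\gamma_j} W_j$ with a multi-index $\gamma_j\in\N^m$ and a nonvanishing factor $W_j$. By properness a subsequence of preimages $y_i$ of the $u_i$ lies in one chart and converges to a center, normalized to $0$. Since $p_j(\sigma(0))=p_j(u_*)=0$ we get $|\gamma_j|\ge 1$ for every $j$, and no $p_j\o\sigma$ vanishes identically because $p_j(\sigma(y_i))=p_j(u_i)>0$. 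Taking logarithms and setting $L_i := \log|y_i|\in\R^m$ (with all components tending to $-\infty$) and $\lambda_i^{(j)}:=\log s_i^{(j)}\to-\infty$, I obtain $\langle\gamma_j,L_i\rangle = \lambda_i^{(j-1)}+O(1)$ for $j\le n-1$ and $\langle\gamma_n,L_i\rangle \le \lambda_i^{(n-1)}+O(1)$.

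The contradiction then rests on two facts. Because $\vh$ is infinitely flat at $0$, one has $\vh(r)=o(r^N)$ for every $N$, whence $|\lambda_i^{(j)}|/|\lambda_i^{(j-1)}| = |\log\vh(s_i^{(j-1)})|/|\log s_i^{(j-1)}|\to\infty$; thus the magnitudes $|\lambda_i^{(0)}|,\dots,|\lambda_i^{(n-1)}|$ grow with super-linear gaps. On the other hand $\gamma_1,\dots,\gamma_n$ are $n$ vectors in $\R^m$ with $m<n$, so there is a nontrivial relation $\sum_j c_j\gamma_j=0$, giving $\sum_j c_j\langle\gamma_j,L_i\rangle=0$. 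Letting $j^*$ be the largest index with $c_{j^*}\ne 0$ and inserting the estimates, the sum is dominated by its $j^*$-th term $c_{j^*}\lambda_i^{(j^*-1)}$, which tends to $\pm\infty$, contradicting that the sum vanishes; when $j^*=n$ the one-sided bound $\langle\gamma_n,L_i\rangle\le\lambda_i^{(n-1)}+O(1)\ll\lambda_i^{(n-2)}$ still forces $|\langle\gamma_n,L_i\rangle|$ to dominate all lower terms. This proves the bound for $L$. Note that at the parameters $t_k$ with $\Ph(t_k)=a_k$, i.e.\ $\vh_{[n-1]}(t_k)=(a_k)_n=M_k^{-1/(4k)}$, the inequality becomes $\on{dist}(a_k,K)\ge M_k^{-1/(4k)}$, exactly the hypothesis of \Cref{lem:estimate}.

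The main obstacle is the passage to the monomial normal form. Along a mere sequence $u_i\to u_*$ a quasianalytic function can be arbitrarily flat relative to another (for instance $(s,s^i)$ as $s\to0$), so no two coordinates suffice and one is forced to use all $n$ of them through the exponent vectors $\gamma_j$; the dimension hypothesis $m<n$ enters precisely as the linear dependence of these $n$ vectors in $\R^m$. Making the needed rigidity available requires resolution of singularities valid in $\cE^{\{M\}}$, and one must ensure it monomializes all coordinates \emph{simultaneously} in common chart coordinates. The remaining delicate point is the purely one-sided control of the last coordinate $p_n(u_i)$, which is absorbed by the dominance argument above.
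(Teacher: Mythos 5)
Your proposal is correct, but it reaches the conclusion by a genuinely different final step than the paper. Both arguments share the same backbone: quasianalytic resolution of singularities (Bierstone--Milman) applied to a product, yielding locally a simultaneous monomial normal form $p_j \o \si = y^{\ga_j} W_j$ in common coordinates. From there the paper argues \emph{directly}, not by contradiction: it resolves not only the product of the nonzero components but also all nonzero differences $p_i - p_j$, invoking \cite[Lemma 7.7]{BM04} to make the exponents \emph{totally ordered}; a pigeonhole count of the zero components of the $n$ ordered exponents (this is where $m<n$ enters) produces indices $i \ne j$ with $\al_i \le \al_j \le d\,\al_i$, so that $K$ is trapped in a horn $\{C^{-1}|y_i|^d \le |y_j| \le C|y_i|\}$ from which the arc, whose $j$-th coordinate is $\vh_{[j-i]}$ of its $i$-th coordinate and hence infinitely flat relative to it, keeps distance at least $\vh_{[j-1]}(t) \ge \vh_{[n-1]}(t)$. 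You instead argue by contradiction along a sequence $t_i \downarrow 0$, pass to logarithms, and let the dimension hypothesis enter as the \emph{linear dependence} of the $n$ exponent vectors $\ga_1,\dots,\ga_n \in \N^m$, played off against the super-logarithmic gaps $|\log \vh_{[j]}(t_i)|/|\log \vh_{[j-1]}(t_i)| \to \infty$; your dominance treatment of the purely one-sided bound on the last coordinate is sound, since $\langle \ga_n, L_i\rangle \le \la_i^{(n-1)} + O(1)$ forces its magnitude to outgrow all lower terms whether or not $p_n(u_i)$ is even smaller. Your route needs less resolution input (no differences, no total ordering), automatically subsumes the paper's preliminary reductions (components vanishing identically, $p(0) \ne 0$), since the contradiction hypothesis forces $p_j(u_i)>0$ and localizes at the limit point $u_*$, and in fact proves the slightly stronger statement that \emph{every} compact $L \subseteq V$ works; the paper's version buys a direct, sequence-free containment statement about $K$. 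Two citation-level caveats you should add: resolution must be applied in the class $\cC = \bigcup_{k} \cE^{\{M^{+k}\}}$ rather than in $\cE^{\{M\}}$ itself, which need not be stable under differentiation (the paper makes exactly this fix), and in the logarithmic bookkeeping the components of $y_i$ that vanish exactly must be set aside --- harmless, because any coordinate appearing with positive exponent in some $\ga_j$ is nonzero at $y_i$, as $p_j(u_i) \ne 0$.
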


\begin{proof}
	We may assume that no component $p_j$ vanishes identically; indeed, if $p_j \equiv 0$ then 
	$K$ is contained in the coordinate plane $y_j = 0$ and hence 
	$\on{dist}(\Ph(t),K) \ge \vh_{[j-1]}(t) \ge \vh_{[n-1]}(t)$ for all $t$.

	Suppose that $p(0) \ne 0$. Then there exists a compact neighborhood $L$ of $0$ such that 
	$\on{dist}(0,K) =:\ep >0$, where $K= p(L)$. For sufficiently small $t>0$ we have $|\Ph(t)| \le \ep/2$. 
	For such $t$, 
	\[
	\on{dist}(\Ph(t),K) \ge \on{dist}(0,K) - |\Ph(t)| \ge \ep/2 \ge |\Ph(t)|\ge \vh_{[n-1]}(t).
	\]

	Assume that $p(0) = 0$ and 	
	that $p_j(x) = x^{\al_j} u_j(x)$ for $j=1,\ldots,n$, where $x=(x_1,\ldots,x_m)$, all $u_j$ are non-vanishing 
	and the set of exponents $\{\al_1,\ldots, \al_n\} \subseteq \N^m$ is totally ordered with respect to the 
	natural partial order of multiindices 
	(that is, for all $1 \le i,j \le n$ we have $\al_i \le \al_j$ or $\al_j \le \al_i$).
	Let $\be_1 \le \be_2 \le  \ldots \le \be_n$ be an ordered arrangement of $\{\al_1,\ldots, \al_n\}$.
	Let $m_i$ be the number of zero components of $\be_i$, for $i =1,\ldots,n$.
	Since $p(0)=0$, we have $m_1 \le m-1$. On the other hand $m_i \ge m_{i+1}$ for all $i = 1,\ldots,n-1$. 
	Since $m<n$, we must have $m_{i_0} = m_{i_0+1}$ for some $i_0$.
	That means there exist two distinct numbers $i,j \in \{1,\ldots,n\}$ with $\al_i \le \al_j$ 
	such that $\al_i$ and $\al_j$ have the same number of zero components. 
	Thus we may find a positive integer $d$ such that $d \cdot \al_i \ge \al_j$.
	Consequently, there is a constant $C>0$ such that for all $x$ in a neighborhood $L$ of $0 \in \R^m$,
		\[
			|p_j(x)| \le C\, |p_i(x)| \quad \text{ and }\quad |p_i(x)|^d \le C\, |p_j(x)|.  
		\]
	This implies that $K = p(L)$ satisfies \eqref{claim}. 
	In fact, the $i$-th component of $\Ph(t)$ is $\vh_{[i-1]}(t)$ and the $j$-th component 
	is $\vh_{[j-1]}(t) = \vh_{[j-i]}\big( \vh_{[i-1]}(t) \big)$. 
	Since $\vh_{[j-i]}$ is an infinitely flat function while $K$ is contained in the set $\{C^{-1} |y_i|^d \le |y_j| \le C |y_i| \}$,  
	$\on{dist}(\Ph(t),K)$ is larger than $\vh_{[j-1]}(t)$ for all sufficiently small $t>0$.

	The general situation can be reduced to these special cases by the desingularization theorem 
	\cite[Theorem 5.12]{BM04} using \cite[Lemma 7.7]{BM04} in order to get the exponents totally ordered. 
	Indeed, applying \cite[Theorem 5.12]{BM04} to the product of all nonzero $p_j$ and all nonzero
	differences of any two $p_i,p_j$ we may assume that after pullback by a suitable mapping $\si$ 
	the components $p_j$ are locally a monomial times a nonvanishing factor (in suitable coordinates),
	and the collection of exponents of the monomials is totally ordered.  
	Here we apply the desingularization theorem to the quasianalytic class 
	$\cC = \bigcup_{k\in \N} \cE^{\{M^{+k}\}}$, where $M^{+k}$ is the regular weight sequence defined 
	by $M^{+k}_j := M_{j+k}$, which has all required properties. 
	This is necessary since the class $\cE^{\{M\}}$ might not be closed under differentiation.
\end{proof}

\begin{remark} \label{rem:resolution}
	For later reference we note that \Cref{lem:resolution} holds for all lower dimensional $\cC$-plots, 
	where $\cC$ is any quasianalytic class of smooth functions which contains the restrictions of polynomials, 
	is stable by composition, differentiation, division by coordinates, and admits an inverse function theorem; 
	cf.\ \cite{BM04}.  
\end{remark}

	Now we can prove that $f \o p \in \cE^{\{M\}}(V)$ for any lower dimensional $\cE^{\{M\}}$-plot $p : V \to \R^n$. 
	To be of class $\cE^{\{M\}}$ is a local condition. So we may assume without loss of generality that 
	$V$ is a neighborhood of $0$.
	By \Cref{lem:resolution}, we may further assume that (after shrinking) $V= L$ is a compact neighborhood of $0$ 
	such that $K=p(L)$ satisfies \eqref{claim}.  
	By \Cref{lem:estimate}, there exists $\rh >0$ such that $\|f\|^M_{K,\rh} =: C < \infty$.
	Since $p \in \cE^{\{M\}}$, there exists $\si>0$ such that $\|p\|^M_{L,\si} =: D < \infty$. 
	Logarithmic convexity of $M$ implies $M_1^k M_k \ge M_j M_{\al_1} \cdots M_{\al_j}$ for all 
	$\al_i \in \N_{>0}$ with $\al_1 + \cdots + \al_j = k$ 
	(cf.\ \cite[Lemma 2.9]{KMRc}).
	Consequently, in view of the Fa\'a di Bruno formula, for $k>0$ and $x \in L$,
	\begin{align*}
		\frac{\|(f \o p)^{(k)}(x)\|_{L^k(\R^m,\R)}}{k!} &\le \sum_{j\ge 1} \sum_{\al_i} \frac{\|f^{(j)}(p(x))\|_{L^j(\R^n,\R)}}{j!} 
		\prod_{i=1}^j \frac{\|p^{(\al_i)}(x)\|_{L^{\al_i}(\R^m,\R^n)}}{\al_i!}
		\\
		&\le \sum_{j\ge 1} \sum_{\al_i} C \rh^j M_j 
		\prod_{i=1}^j D \si^{\al_i} M_{\al_i}
		\\
		&\le  C (M_1\si)^k M_k \sum_{j\ge 1} \binom{k-1}{j-1}  (D\rh)^j
		\\
		&\le  CD\rh (M_1\si)^k   (1+ D\rh)^{k-1} M_k,
	\end{align*}
	that is, there exists $\ta>0$ such that $\|f \o p \|^M_{L,\ta} < \infty$.
	This ends the proof of \Cref{thm:main}.

\subsection{Proof of \texorpdfstring{\Cref{thm:Beurling}}{Theorem 2}}

	Set $L_k := M_k^{1/2}$. Then $L=(L_k)$ is a quasianalytic regular weight sequence satisfying $(L_k/M_k)^{1/k} \to 0$.
	\Cref{thm:main} associates a function $f$ with $L$ which is as required. 
	Indeed, $f$ is of class $\cE^{\{L\}}\subseteq \cE^{(M)}$ along the image of lower dimensional $\cE^{(M)}$-plots $p$, 
	by \Cref{lem:resolution} and \Cref{rem:resolution},
	and thus $f\o p$ is $\cE^{(M)}$, since the class is stable by composition.

\subsection{Proof of \texorpdfstring{\Cref{thm:main2}}{Theorem 3}}

By \cite[Theorem 11]{Rainer:aa}, there is a family $\fM$ of quasianalytic regular weight sequences $M=(M_k)$  
such that 
\[
	\cE^{\{\om\}}(U) = \{ f \in \cC^\infty(U) : \A \text{ compact } K \subseteq U \E M \in \fM \E \rh >0 : \|f\|^M_{K,\rh} < \infty\}.
\]
Fix $M \in \fM$ and a positive sequence $N=(N_k)$.  
Let $f$ be the $\cC^\infty$-function associated with $M$ and $N$ provided by \Cref{thm:main}. 
Then $f$ is not of class $\cE^{\{N\}}$.
Let $p$ be any lower dimensional $\cE^{\{\om\}}$-plot. 
Then $f \o p$ is of class $\cE^{\{\om\}}$, by \Cref{lem:resolution} and \Cref{rem:resolution}, since 
$\cE^{\{\om\}}$ is stable under composition as $\om$ is concave.

\subsection{Proof of \texorpdfstring{\Cref{thm:Beurling2}}{Theorem 4}}

By \cite{RainerSchindl12},
there is a one-parameter family $\fM = \{M^x\}_{x>0}$ family of quasianalytic positive sequences with $(M^x_k)^{1/k} \to \infty$ for all $x$,  
$M^x \le M^y$ if $x \le y$, and 
\[
	\cE^{(\om)}(U) = \cE^{(\fM)}(U) :=  \bigcap_{x>0} \cE^{(M^x)}(U).  
\]
The next lemma is inspired by \cite[Lemma 6]{Komatsu79b}.

\begin{lemma} \label{lem:Beurling2}
	There is a quasianalytic regular weight sequence $L$ such that $(L_k/M^{x}_k)^{1/k} \to 0$ for all $x>0$.
\end{lemma}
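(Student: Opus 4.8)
The plan is to collapse the continuum of conditions into a countable nested family, build $L$ by a block-diagonal construction against that family, and pass to a log-convex regularization at the end.

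\emph{Reduction.} Since $M^x$ is increasing in $x$, set $N^{(j)} := M^{1/j}$, so that $N^{(1)} \ge N^{(2)} \ge \cdots$ is a decreasing family of quasianalytic sequences with $(N^{(j)}_k)^{1/k} \to \infty$. We may assume them normalized ($N^{(j)}_0 = 1$) and log-convex, since these are the defining weight sequences of the weight matrix \cite{RainerSchindl12} (alternatively one passes to log-convex minorants, which only makes the target harder). It then suffices to produce $L$ with $(L_k/N^{(j)}_k)^{1/k} \to 0$ for every $j$: given $x > 0$, choose $j$ with $1/j \le x$, so $N^{(j)} \le M^x$ and hence $L_k/M^x_k \le L_k/N^{(j)}_k$.

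\emph{Block construction.} Put $\ve_r := 1/r \downarrow 0$ and choose an increasing sequence of endpoints $k_1 < k_2 < \cdots$; on the block $B_r := [k_r, k_{r+1})$ set provisionally $\tilde L_k := \ve_r^{\,k} N^{(r)}_k$. Two requirements fix the $k_r$. First, as $(N^{(r)}_k)^{1/k}$ is increasing with limit $\infty$, choose $k_r$ so large that $(N^{(r)}_k)^{1/k} \ge r/\ve_r$ for all $k \ge k_r$; then $(\tilde L_k)^{1/k} \ge r$ on $B_r$, so $(\tilde L_k)^{1/k} \to \infty$. Second, since $N^{(r)}$ is quasianalytic the tail $\sum_{k \ge k_r} N^{(r)}_k/((k+1) N^{(r)}_{k+1})$ diverges, so $k_{r+1}$ can be taken large enough that the partial sum over $B_r$ exceeds $1$. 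The little-$o$ estimate is now immediate: for $k \in B_r$ with $r \ge j$ we have $N^{(r)} \le N^{(j)}$, whence $(\tilde L_k/N^{(j)}_k)^{1/k} \le \ve_r \to 0$.

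\emph{Log-convexity, quasianalyticity, and the obstacle.} The sequence $\tilde L$ need not be log-convex, and there is no control on the increments $N^{(r)}_{k+1}/N^{(r)}_k$ across different $r$, so a direct gluing fails; I would therefore take $L$ to be the log-convex minorant of $\tilde L$ (normalized so $L_0 = 1 \le L_1$). Being $\le \tilde L$ it inherits the little-$o$ estimate, and since $\log \tilde L_k / k \to \infty$, inserting support lines of arbitrarily large slope shows the convex minorant of $\log \tilde L_k$ is still superlinear, so $(L_k)^{1/k} \to \infty$. The \textbf{main obstacle} is to guarantee that this regularization does not destroy quasianalyticity, which is a fragile tail condition and for which the divergence of $\sum \tilde L_k/((k+1)\tilde L_{k+1})$ is not the correct quantity once $\tilde L$ is non-convex. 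I would verify it through the associated function $T_L(t) := \sup_k t^k/(k!\,L_k)$: quasianalyticity of the log-convex $L$ is equivalent to divergence of Carleman's integral $\int_1^\infty t^{-2} \log T_L(t)\,dt$, and since $L \le \tilde L$ we have $T_L \ge T_{\tilde L}$, with $T_{\tilde L}(t) \ge \sup_{k \in B_r} (t/\ve_r)^k/(k!\,N^{(r)}_k)$ a dilate of the associated function of the quasianalytic $N^{(r)}$ on each block. This dictates the order of the choices — fix $\ve_r$, then $k_r$ for the root condition, then $k_{r+1}$ so large that block $r$ contributes a full unit to Carleman's integral — after which the divergence of that integral, hence quasianalyticity of $L$, follows.
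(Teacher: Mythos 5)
Your construction is essentially the paper's: both reduce the continuum of conditions to a countable cofinal subfamily ($x_p \downarrow 0$ there, $N^{(j)} = M^{1/j}$ here), build a block-diagonal sequence whose roots are forced to infinity while a damping factor yields the little-$o$ estimate, and finish with the log-convex minorant, whose roots stay superlinear (the paper argues via points where the minorant touches $L$ plus monotonicity of $L_k^{1/k}$; your support-line argument is equally valid). The only substantive difference is the damping — the paper takes $L_j := \sqrt{M^{x_p}_j}$ on the block $[j_p, j_{p+1})$ with the bookkeeping $C_p \le 2^{j_p}$, which avoids even assuming monotonicity of $(M^{x_p}_k)^{1/k}$, whereas you use $\ve_r^k N^{(r)}_k$ — and these are interchangeable. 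Where you genuinely diverge is on quasianalyticity, which you single out as the main obstacle and attack head-on via Carleman's criterion, with blockwise unit contributions to $\int_1^\infty t^{-2}\log T_L(t)\,dt$. That sketch is completable (each block can be made to carry the full associated function of $N^{(r)}$ on a suitable $t$-interval, and $L \le \tilde L$ gives $T_L \ge T_{\tilde L}$, which is the right direction), but it is heavy machinery for a point that is free of charge: the estimate $(L_k/M^x_k)^{1/k} \to 0$, which you have already established, gives $\cE^{\{L\}} \subseteq \bigcap_{x>0}\cE^{(M^x)} = \cE^{(\om)}$, and $\cE^{(\om)}$ is quasianalytic by the hypothesis on $\om$; quasianalyticity of a class passes trivially to subclasses, and since $L$ is a regular weight sequence the Denjoy--Carleman theorem then yields \eqref{eq:qaM}. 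This soft inclusion argument is evidently what lets the paper impose no extra condition on the block endpoints and simply conclude that the minorant ``has all required properties''; your extra constraint on $k_{r+1}$ (a unit of Carleman mass per block) is thus not wrong, merely unnecessary — and since you only outline rather than execute that verification, the inclusion argument is also what would close your write-up most efficiently.
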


\begin{proof}
	Choose a positive sequence $x_p$ which is strictly decreasing to $0$. 
	For every $p \ge 1$ we know that $(M^{x_p}_k)^{1/k} \to \infty$ as $k \to \infty$. Thus for every $p$ there is a constant $C_p>0$ 
	such that 
	\[
		\frac{1}{(M^{x_p}_k)^{1/k}} \le \frac{C_p^{1/k}}{p} \quad \text{ for all } k.  
	\]
	Choose a strictly increasing sequence $j_p$ of positive integers such that $C_p \le 2^{j_p}$ for all $p$. 
	Consider the sequence $L$ defined by $L_j := 1$ if $j<j_1$ and  
	\[
		L_j := \sqrt{M^{x_p}_j} \quad \text{ if } j_p \le j < j_{p+1}.  
	\]
	First, for $j_p \le j < j_{p+1}$,
	\begin{align*}
		L_j^{1/j} = \sqrt{(M^{x_p}_j)^{1/j}} \ge \sqrt{\frac{p}{C_p^{1/j}}} \ge \sqrt{\frac{p}{2}} 
	\end{align*}
	which tends to infinity as $j \to \infty$.
	On the other hand, for $j_p \le j < j_{p+1}$ and $x_p \le x$, 
	\begin{align*}
		\Big(\frac{L_j}{M^x_j}\Big)^{1/j} = \Big(\frac{\sqrt{M^{x_p}_j}}{M^x_j}\Big)^{1/j} \le \frac{1}{\sqrt{(M^x_j)^{1/j}}}  
	\end{align*}
	which tends to $0$ as $j \to \infty$. 

	Let $\ul L$ be the log-convex minorant of $L$. Since $L_k^{1/k} \to \infty$, there exists a sequence $k_j \to \infty$ of integers 
	such that $\ul L_{k_j} = L_{k_j}$ for all $j$. It follows that $\ul L_k^{1/k} \to \infty$, since $\ul L_k^{1/k}$ is increasing 
	by logarithmic convexity.
	Thus $\ul L$ has all required properties.
\end{proof}

The proof of \Cref{thm:Beurling2} now follows the arguments in the proof of \Cref{thm:Beurling}. 
\Cref{thm:main} associates a function $f$ with the sequence $L$ from 
\Cref{lem:Beurling2}
 which is of class $\cE^{\{L\}}$ along the image of any lower dimensional $\cE^{(\om)}$-plots $p$ (by \Cref{lem:resolution} 
 and \Cref{rem:resolution}). 
Since $(L_k/M^x_k)^{1/k} \to 0$ for all $x>0$, we have an inclusion of classes $\cE^{\{L\}} \subseteq \cE^{(\om)}$. 
Since $\om$ is concave, the class $\cE^{(\om)}$ is stable under composition, whence 	
$f\o p$ is of class $\cE^{(\om)}$.
The proof of \Cref{thm:Beurling2} is complete.


\def\cprime{$'$}
\providecommand{\bysame}{\leavevmode\hbox to3em{\hrulefill}\thinspace}
\providecommand{\MR}{\relax\ifhmode\unskip\space\fi MR }
\providecommand{\MRhref}[2]{%
  \href{http://www.ams.org/mathscinet-getitem?mr=#1}{#2}
}
\providecommand{\href}[2]{#2}

\end{document}